\documentclass[12pt]{article}

\usepackage{amsmath}
\usepackage{amssymb}
\usepackage[mathscr]{eucal}
\usepackage{graphicx}
\usepackage{color}
\usepackage{amsthm}
\usepackage{url}

\usepackage{caption}
\usepackage{subcaption}

\usepackage{amsfonts}
\usepackage{geometry}
\usepackage{tikz}
\usetikzlibrary{arrows.meta}
\usepackage{lineno}

\newtheorem{Theorem}{\sc Theorem}

\newtheorem{Proposition}[Theorem]{\sc Proposition}
\newtheorem{Lemma}[Theorem]{\sc Lemma}

\newtheorem{Remark}[Theorem]{\sc Remark}

\newcommand{\R}{\mbox{{${\mathbb{R}}$}}}

\def\sqr#1#2{{
    \vcenter{
         \vbox{\hrule height.#2pt
               \hbox{\vrule width.#2pt height#1pt \kern#1pt
                     \vrule width.#2pt
               }
               \hrule height.#2pt
         }
    }
}}

\def\lista#1
{{ \itemindent 0.0cm \labelsep .2cm \leftmargin 0.8cm \rightmargin
0.0cm \labelwidth 0.6cm \topsep 0.0mm
\parsep 0.0mm
\itemsep 0.0mm
\begin{list}{}
{ \setlength{\leftmargin}{.8cm} \setlength{\rightmargin}{0.0cm}
\setlength{\parsep}{0.0mm} \setlength{\topsep}{.0mm}
\setlength{\parskip}{.0cm} \setlength{\itemsep}{.0cm} }
{#1}\end{list}} }

\newcounter{theorem}

\begin{document}
%\linenumbers

\title{\bf Finite Element Approximation of a~Hemivariational Inequality
for Steady-State Heat Conduction: Double-Limit Convergence
of Penalization and Discretization}

\vspace{22mm}
{\author{
Piotr Bartman-Szwarc$^{1}$,\
Anna Ochal$^{1}$\footnote{Corresponding author, E-mail : anna.ochal@uj.edu.pl },\\ and\\
Domingo A. Tarzia$^{2,3}$\\[6mm]
{\it \small $^1$ Chair of Optimization and Control, Jagiellonian University in Krakow}\\
{\it\small Lojasiewicza 6, 30348 Krakow, Poland}		\\[6mm]
{\it\small $^{2}$ Departamento de Matem\'atica, FCE, Universidad Austral}\\
		{\it \small Paraguay 1950, S2000FZF Rosario, Argentina}\\
{\it\small $^{3}$ CONICET, Argentina}}}

\date{}
\maketitle
\thispagestyle{empty}

\begin{center}
\textit{Dedicated to Professor Mircea Sofonea on the occasion of his 70th birthday}
\end{center}

\vskip 5mm
\noindent {\small{\bf Abstract.}
In this paper we study the numerical approximation and convergence analysis of a~steady-state heat conduction problem with mixed boundary conditions.
The physical model is governed by a hemivariational inequality depending on a heat transfer parameter $\alpha > 0$.
We consider the finite element approximation of this penalized problem, as well as its corresponding limit problem with a prescribed constant temperature on a part of the boundary.
The main theoretical result establishes the strong convergence of the discrete solutions.
Specifically, we prove the double-limit convergence of the finite element approximations to the limit solution as the mesh size $h$ tends to zero and the penalization parameter $\alpha$ tends to infinity, independently and simultaneously.
We further complement the convergence analysis with an error estimate of optimal order for the discrete limit problem, and we explain why an estimate uniform in the penalization parameter cannot be expected.
The theoretical results are illustrated by numerical simulations on four examples verified by the method of manufactured solutions.
}

\vskip 4mm
\noindent
{\bf Keywords:} steady-state heat conduction, hemivariational inequality, double convergence, penalization, finite element method.

\vskip 4mm
\noindent {\bf 2020 Mathematics Subject Classification:}
35J20, % elliptic variational inequalities
35R35, % free boundary problems (Stefan problem)
65N30, % finite elements for boundary value problems
49J40, % variational inequalities
65K15.  % 	Numerical methods for variational inequalities and related problems

\newpage
\section{Introduction}
\setcounter{equation}0

\medskip\noindent
Mathematical models of heat conduction with mixed boundary conditions often lead to complex boundary value problems, especially when the boundary law governing heat exchange on part of the boundary is nonlinear, nonmonotone or multivalued.
Such phenomena arise naturally in thermal engineering and industrial applications, for instance in phase-change processes, in the modelling of contact with a thermostat, and in problems where the heat flux-temperature relationship cannot be described by a smooth, single-valued function.
The mathematical framework for such nonsmooth boundary laws is provided by the theory of hemivariational inequalities, which was originally introduced by Panagiotopoulos \cite{Panagiotopoulos1993} to handle energy functionals that are neither convex nor smooth.
The Clarke subdifferential \cite{Clarke1990} is the key analytical tool that allows the formulation and analysis of these problems.

We consider a bounded domain $\Omega \subset \R^d$ with a regular boundary $\partial\Omega = \Gamma$ which consists of the union of three disjoint parts $\Gamma_1$, $\Gamma_2$ and $\Gamma_3$ with positive measure of $\Gamma_1$ and $\Gamma_3$.
We formulate the following two steady-state heat conduction problems with mixed boundary conditions:
\begin{itemize}
\item[($P_{\infty}$):] $-\Delta u = g$ in $\Omega$,\quad
$u|_{\Gamma_1} = 0$,\quad
$-\frac{\partial u}{\partial n}|_{\Gamma_2} = q$,\quad
$u|_{\Gamma_3} = b$.
\item[($P_{\alpha}$):] $-\Delta u = g$ in $\Omega$,\quad
$u|_{\Gamma_1} = 0$,\quad
$-\frac{\partial u}{\partial n}\big|_{\Gamma_2} = q$,\quad
$-\frac{\partial u}{\partial n}\big|_{\Gamma_3} \in \alpha\,\partial j(u)$.
\end{itemize}
Here $u$ is the temperature in $\Omega$, $n$ is the outward unit normal on $\Gamma$, $g$ is the internal heat source in $\Omega$, $q$ is the prescribed heat flux on $\Gamma_2$, $b$ is the prescribed constant temperature on $\Gamma_3$, $\alpha > 0$ is the heat transfer coefficient, and $\partial j$ denotes the Clarke subdifferential of a superpotential $j\colon \Gamma_3 \times \mathbb{R} \to \mathbb{R}$.
The boundary condition on $\Gamma_3$ in ($P_\alpha$) is a nonlinear Robin condition of hemivariational type, and the prescribed boundary condition on $\Gamma_3$ in ($P_\infty$) is its "Dirichlet limit", i.e., the Dirichlet boundary condition in the limit. Multivalued relations of this type appear in steady-state heat conduction problems, including models of semipermeable membranes and temperature control. Problem ($P_\alpha$) further provides a~prototype for boundary semipermeability models arising in hydraulics, porous media flow, and electrostatics (see \cite{MigOch2004, Panag1985}). The convex case has been addressed in \cite{Barbu1982, DuvautLions1972}.

The specific class of problems considered in this paper arises from a penalty approximation of the Dirichlet boundary condition.
On a portion of the boundary the ideal limit model ($P_\infty$) prescribes the temperature exactly to a value $b$; the regularized model ($P_\alpha$), for each $\alpha>0$, replaces this hard constraint by a Robin-type boundary condition whose flux is given by $\alpha\, \partial j(u)$.
As $\alpha \to +\infty$ one recovers the Dirichlet boundary condition in the limit.
This penalization strategy has a long history in the study of elliptic problems with mixed boundary conditions.
For linear problems, the convergence of solutions $u_\alpha \to u_\infty$ as $\alpha \to \infty$, and related comparison and monotonicity properties, were studied systematically by Gariboldi and Tarzia~\cite{GaTa} in the context of boundary optimal control for Stefan-like problems, and by Tarzia~\cite{Ta2}, and Tabacman and Tarzia~\cite{TaTa} for the steady-state two-phase Stefan problem.
The passage from the Robin boundary condition to the Dirichlet one in the nonlinear, hemivariational setting was addressed by Gariboldi, Mig\'orski, Ochal and Tarzia~\cite{GMOT2021}, who established sufficient conditions on the superpotential $j$ for existence, strong convergence, and comparison of solutions.

The present paper takes the next step by combining the penalization limit $\alpha \to +\infty$ with a finite element discretization (mesh size $h \to 0^+$) and proving that both limits can be taken simultaneously.
Convergence analysis for finite element discretizations of hemivariational inequalities has attracted significant attention. We refer to \cite{HanMigorskiSofonea2014, HanSofonea2002} for a comprehensive treatment and to \cite{MOS2013} for the abstract framework of nonlinear inclusions and hemivariational inequalities.
Double-limit results of a similar flavour, combining a penalization or regularization parameter with a discretization parameter, appear in the context of optimal control problems in \cite{Tarzia2016} and in related elliptic settings in \cite{CarlMotreanu2007}.
To the best of our knowledge, the simultaneous double limit for the finite element approximation of a hemivariational inequality with a penalized Dirichlet condition on part of the boundary has not been addressed in the literature; this is the main contribution of the present work.

The finite element analysis of hemivariational inequalities has advanced substantially in recent years.
Han, Sofonea and Barboteu~\cite{HanSofoneaBarboteu2017} established optimal-order error estimates for elliptic hemivariational inequalities under appropriate solution regularity assumptions.
The comprehensive survey by Han and Sofonea~\cite{HanSofonea2019} covers existence, uniqueness, convergence analysis and numerical simulation for a class of contact problems governed by hemivariational inequalities.
A broad numerical perspective on methods for hemivariational inequalities arising in contact mechanics, including direct optimization, augmented Lagrangian, and primal-dual active set strategies, is provided in \cite{OJB}, where the \textit{conmech} package used in the present work was also first described.
More recently, Han, Feng, Wang and Huang~\cite{HanFengWangHuang2025} unified the convergence analysis for variational-hemivariational inequalities under minimal regularity assumptions, while Wang, Wu, and Han ~\cite{WangWuHan2021} applied the virtual element method in this setting.

To place our work in a proper context, we now indicate how the simultaneous double limit $(h,\alpha) \to (0^+, +\infty)$ goes beyond the single-limit results recalled above, and where the difficulties of the analysis lie.
The difficulties come from the simultaneous limit rather than from either of the two limits separately.
Neither of them can be taken first: the discrete solutions are indexed by both parameters at once, so every estimate on them has to be uniform in $h$ and in $\alpha$ together, and the usual argument, in which one parameter is frozen while the other is sent to its limit, is unavailable.
This is also what separates the present setting from the double-limit results known so far, which are confined to the quadratic potential $j(x,r) = \frac{1}{2}(r-b)^2$ \cite{Tarzia2016}; here $j$ is allowed to be nonconvex and nonsmooth, and the boundary law is governed by the multivalued and nonmonotone Clarke subdifferential.
The penalty term is the source of the difficulty and of the interest alike.
Its integrand is only upper semicontinuous, it carries the factor $\alpha$ that grows without bound, and in the double limit the solutions and the test functions move at the same time, so the term is not controlled by monotonicity and has to be handled by compactness.
The same term is what encodes the Dirichlet condition in the limit: the constraint $u|_{\Gamma_3} = b$ is not imposed on the discrete spaces at any finite $\alpha$ and has to be recovered from the penalty as $\alpha$ grows, which is where the nonmonotone nature of $j$ is felt most directly.

We also establish an error estimate of optimal order for the discrete limit problem.
For the penalized problems no estimate uniform in $\alpha$ can be expected, and we explain why: the smallness condition underlying the existing theory degenerates precisely in the regime $\alpha \to +\infty$, and the discrete problem may genuinely have several solutions at a finite $\alpha$, as our computations show.

The main goal of this paper is to prove that a sequence of discrete solutions to the penalized hemivariational inequality converges strongly to the unique  solution of the limit problem as $(h,\alpha) \to (0^+, +\infty)$; throughout the paper, we use the shorthand $(h,\alpha)\to(0,\infty)$.
Furthermore, we validate our theoretical findings with numerical experiments, computing the convergence rates explicitly.

The rest of the paper is organized as follows.
In Section~2 we establish the preliminary material, including the strong and weak formulations of both problems, the functional spaces, and the fundamental hypotheses.
In Section~3 we introduce the finite element discretization and prove the convergence results.
Section~4 is devoted to numerical simulations that illustrate and quantify the theoretical double-limit convergence.

\section{Preliminaries}
In this section we recall standard notation and preliminary results. In a Banach space~$X$ for a locally Lipschitz function $\varphi\colon X\to \mathbb{R}$ we define (\cite{Clarke1990}):

\begin{itemize}
    \item[] the generalized (Clarke) directional derivative of $\varphi$ at $x\in X$ in the direction $v\in X$
    $$\varphi^0(x;v):=\limsup_{y\to x, \lambda\to 0} \frac{\varphi(y+\lambda v)-\varphi(y)}{\lambda},$$
    \item[] the generalized gradient (subgradient) of $\varphi$ at $x\in X$
    $$\partial \varphi (x):=\{\zeta\in X^*: \varphi^0(x;v)\leq \langle \zeta,v\rangle\quad \forall v\in X\}.$$
\end{itemize}

\noindent
Throughout the paper we use the following notation of spaces and sets:
\begin{itemize}
\item[] $H = L^2(\Omega)$,\quad $\displaystyle (u, v)_H = \int_{\Omega} u v\,dx$\quad  $\forall u, v \in H$,
\item[] $Q = L^2(\Gamma_2)$,\quad  $\displaystyle (u, v)_Q = \int_{\Gamma_2} u v \,dx$\quad  $\forall u, v \in Q$,
\item[] $V = H^1(\Omega)$,\quad  $\|v\|_V^2 = \|v\|_{L^2(\Omega)}^2 + \|\nabla v\|_{L^2(\Omega; \mathbb{R}^d)}^2$\quad  $\forall v \in V$,
\item[] $V_0 = \{v \in V: v|_{\Gamma_1} = 0\}$,\quad  $\|v\|_{V_0} = \|\nabla v\|_{L^2(\Omega; \mathbb{R}^d)}$\quad  $\forall v \in V_0$,
\item[] $K = \{v \in V: v|_{\Gamma_1} = 0, v|_{\Gamma_3} = b\} = \{v \in V_0: v|_{\Gamma_3} = b\}$,
\item[] $K_0 = \{v \in V: v|_{\Gamma_1} = 0, v|_{\Gamma_3} = 0\} = \{v \in V_0: v|_{\Gamma_3} = 0\}$.
\end{itemize}
We remark that from the Poincar\'e inequality the norms $\|\cdot\|_V$ and $\|\cdot\|_{V_0}$ are equivalent.

\noindent
Moreover, we define the bilinear form $a\colon V_0 \times V_0 \to \mathbb{R}$ by
\begin{align*}
a(u, v) := \int_{\Omega} \nabla u \cdot \nabla v \,dx \quad \forall u, v \in V_0.
\end{align*}
The form $a$ is symmetric, continuous and coercive with a constant $m_a > 0$, i.e., $$|a(u,v)|\leq M\|u\|_V\|v\|_V,\quad  a(v, v) = \|v\|_{V_0}^2 \ge m_a \|v\|_V^2 \quad \forall u, v \in V_0 \subset V.$$
The linear form $L\colon V \to \mathbb{R}$ is defined as
\begin{align*}
L(v) := \int_{\Omega} g v\,dx - \int_{\Gamma_2} q \gamma v \,dx \quad \forall v \in V,
\end{align*}
where $\gamma\colon V \to L^2(\Gamma)$ denotes the trace operator on $\Gamma$.
%(In what follows, we write $v$ for the trace of a function $v \in V$ on the boundary $\Gamma$).
If $g\in L^2(\Omega)$ and $q \in L^2(\Gamma_2)$, then
the form $L$ is continuous, i.e.,  from the Cauchy-Schwarz inequality we have
\begin{align*}
|L(v)| \le (\|g\|_{L^2(\Omega)} + \|q\|_{L^2(\Gamma_2)} \|\gamma\|) \|v\|_V = c_L \|v\|_V  \quad \forall v\in V \ \text{with\ } c_L>0.
\end{align*}

\noindent
In a standard way, we obtain the following weak formulation of Problems ($P_{\infty}$) and ($P_{\alpha}$), respectively:
\begin{itemize}
\item[($S_{\infty}$):] find $u_{\infty} \in K$ \ such that \ $a(u_{\infty}, v) = L(v)$ \quad for all $v \in K_0$\\
or equivalently\\
find $u_{\infty} \in K$ \ such that \ $a(u_{\infty}, v - u_{\infty}) = L(v - u_{\infty})$ \quad for all $v \in K$,
\item[($S_{\alpha}$):] find $u_{\alpha} \in V_0$ \ such that \  $\displaystyle a(u_{\alpha}, v) + \alpha\int_{\Gamma_3} j^0(\gamma u_{\alpha}; \gamma v) \,d\Gamma \ge L(v)$ \quad for all $v \in V_0$.
\end{itemize}
Problem ($S_{\infty}$) corresponds to a variational equality of the first kind, whereas ($S_{\alpha}$) takes the form of a~hemivariational inequality.

We admit the following hypothesis on the data.\\[2mm]
\noindent
($H_0$): $g \in L^2(\Omega)$, $q \in L^2(\Gamma_2)$, $b \in \mathbb{R}$. \\[2mm]
$H(j)$: $j\colon \Gamma_3 \times \mathbb{R} \rightarrow \mathbb{R}$ is such that:
\begin{itemize}
\item[(a)] $j(\cdot, r)$ is measurable for all $r \in \mathbb{R}$,
\item[(b)] $j(x, \cdot)$ is locally Lipschitz for a.e. $x \in \Gamma_3$,
\item[(c)] there exist $c_0, c_1 \ge 0$ such that $|\partial j(x, r)| \le c_0 + c_1|r|$ for all $r \in \mathbb{R}$, a.e. $x \in \Gamma_3$,
\item[(d)] $j^0(x, r; b-r) \le 0$ for all $r \in \mathbb{R}$, a.e. $x \in \Gamma_3$,
\item[(e)] $j^0(x, r; b-r) = 0 \iff r = b$ for a.e. $x \in \Gamma_3$.
\end{itemize}
Note that the constant $b$ in $H(j)(d)-(e)$ is the same as in the boundary condition on $\Gamma_3$ in ($P_{\infty}$). Hypothesis $H(j)(e)$, in conjunction with $H(j)(d)$, plays a crucial role in identifying the boundary behavior of the limit of the penalized solutions (see the proofs of Lemma~\ref{L1} and Theorem~\ref{Tall} below).
The existence results for elliptic variational equations and boundary hemivariational inequalities are well known and specified in the following remark.

\begin{Remark}\label{Rexist}
For every $\alpha>0$ Problem ($S_{\alpha}$) has a solution $u_{\alpha} \in V_0$ provided hypothesis ($H_0$) and $H(j)(a)-(c)$ hold (cf. \cite[Theorem~4]{GMOT2021}).
Problem ($S_{\infty}$) has the unique solution $u_{\infty} \in K$ under assumption ($H_0$) (cf. \cite[Corollary~2.102]{CarlMotreanu2007} and \cite[Teorema~1]{Ta1}).
\end{Remark}

Now we prove a useful property of the superpotential $j$.

\begin{Lemma} \label{L_j.1}
If assumptions $H(j)(a)-(c)$ hold, then
for every $w_k \to w$ weakly in $V$ and $v_k \to v$ weakly in $V$, as $k \rightarrow \overline{k} \in \overline{\mathbb{R}}^m$, we have
\begin{align}\label{j.1}
\limsup_{k \rightarrow \overline{k}} \int_{\Gamma_3} j^0(\gamma w_k; \gamma v_k)\,d\Gamma \le \int_{\Gamma_3} j^0(\gamma w; \gamma v)\,d\Gamma.
\end{align}
\end{Lemma}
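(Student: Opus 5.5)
The plan is to pass from weak convergence in $V$ to strong convergence of traces, then push the $\limsup$ inside the integral using the upper semicontinuity of $j^0$ and a domination bound coming from $H(j)(c)$. The argument works along an arbitrary sequence realizing the $\limsup$; the index set $k\to\overline{k}\in\overline{\mathbb{R}}^m$ plays no role beyond this reduction.

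First, choose a sequence $k_n\to\overline{k}$ such that
\[
\lim_n \int_{\Gamma_3} j^0(\gamma w_{k_n};\gamma v_{k_n})\,d\Gamma
=\limsup_{k\to\overline{k}}\int_{\Gamma_3} j^0(\gamma w_k;\gamma v_k)\,d\Gamma .
\]
Next, use compactness of the trace operator $\gamma\colon V\to L^2(\Gamma)$ for a bounded Lipschitz domain (compact embedding $H^1(\Omega)\hookrightarrow H^{s}(\Omega)$ with $1/2<s<1$, followed by the continuous trace). This gives $\gamma w_{k_n}\to\gamma w$ and $\gamma v_{k_n}\to\gamma v$ strongly in $L^2(\Gamma_3)$. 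Passing to a further subsequence, we also get a.e. convergence on $\Gamma_3$ together with dominating functions $\bar w,\bar v\in L^2(\Gamma_3)$ satisfying $|\gamma w_{k_n}|\le\bar w$ and $|\gamma v_{k_n}|\le\bar v$ a.e. (the partial converse of dominated convergence). The subsequence still realizes the $\limsup$.

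Then record two pointwise facts. By $H(j)(c)$ and the formula $j^0(x,r;s)=\max\{\zeta s:\zeta\in\partial j(x,r)\}$, we have
\[
|j^0(x,r;s)|\le (c_0+c_1|r|)|s| .
\]
Hence
\[
j^0(\gamma w_{k_n};\gamma v_{k_n})\le (c_0+c_1\bar w)\bar v=:h\in L^1(\Gamma_3)
\]
by Cauchy--Schwarz. Moreover $(r,s)\mapsto j^0(x,r;s)$ is upper semicontinuous for a.e. $x$, a standard property of the Clarke derivative of a locally Lipschitz function. It follows that
\[
\limsup_n j^0(x,\gamma w_{k_n}(x);\gamma v_{k_n}(x))\le j^0(x,\gamma w(x);\gamma v(x))\quad\text{a.e. on }\Gamma_3 .
\]
Measurability of $x\mapsto j^0(x,\gamma w(x);\gamma v(x))$ follows from $H(j)(a)$--(b). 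Write $j^0$ as a $\limsup$ over rational sequences of difference quotients, which are Carathéodory functions; this is the standard argument, for instance as in \cite{MOS2013}.

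Finally, apply the reverse Fatou lemma to the sequence $h-j^0(\gamma w_{k_n};\gamma v_{k_n})\ge 0$. This yields
\[
\lim_n\int_{\Gamma_3} j^0(\gamma w_{k_n};\gamma v_{k_n})\,d\Gamma\le\int_{\Gamma_3}\limsup_n j^0(\gamma w_{k_n};\gamma v_{k_n})\,d\Gamma\le\int_{\Gamma_3} j^0(\gamma w;\gamma v)\,d\Gamma,
\]
which proves \eqref{j.1}. The main obstacle is the integrable majorant: weak convergence only bounds the traces in $L^2$, so an $L^1$ dominating function is not available directly. This is why the reduction first passes through strong trace convergence and the subsequence with $L^2$ dominating functions.
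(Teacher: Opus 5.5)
Your proof is correct and follows the same route as the paper. Both use compactness of the trace, a subsequence with a.e.\ convergence and $L^2(\Gamma_3)$ majorants, the bound $j^0(x,r;s)\le (c_0+c_1|r|)|s|$ from $H(j)(c)$, upper semicontinuity of $(r,s)\mapsto j^0(x,r;s)$, and the (reverse) Fatou lemma; your preliminary choice of a sequence $k_n\to\overline{k}$ realizing the $\limsup$ before extracting subsequences, together with your measurability remark, makes explicit two points that the paper's proof passes over silently.
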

\begin{proof}
First, we observe that by the compactness of the trace operator, we have $w_k|_{\Gamma_3}~\rightarrow~w|_{\Gamma_3}$ and $v_k|_{\Gamma_3}~\rightarrow~v|_{\Gamma_3}$ in $L^2(\Gamma_3)$, as $k~\rightarrow~\overline{k}$.
Passing to subsequences, we may suppose that
\begin{align*}
w_k(x) \rightarrow w(x) \text{ and } v_k(x)~\rightarrow~v(x) \text{ for a.e. } x \in \Gamma_3,
\end{align*}
and there exist $h_w, h_v \in L^2(\Gamma_3)$ (cf. \cite[Theorem~4.9]{Brezis2011}) such that
\begin{align*}
|w_k(x)| \le h_w(x) \text{ and } |v_k(x)| \le h_v(x) \text{ a.e. } x \in \Gamma_3.
\end{align*}
Using the upper semicontinuity of the function $\mathbb{R} \times \mathbb{R} \ni (r, s) \mapsto j^0(x, r; s) \in \mathbb{R}$ for a.e. $x \in \Gamma_3$ (see \cite[Proposition 3.23 (ii)]{MOS2013}), we get
\begin{align*}
\limsup_{k \rightarrow \overline{k}} j^0(x, w_k(x); v_k(x)) \le j^0(x, w(x); v(x)) \quad \text{a.e. } x \in \Gamma_3.
\end{align*}

\noindent
Next, taking into account the estimate (\cite[Proposition 3.23 (iii)]{MOS2013})
\begin{align*}
j^0(x, w_k(x); v_k(x)) &= \max \{ \langle \psi_k(x), v_k(x) \rangle : \psi_k(x) \in \partial j(x, w_k(x)) \} \\
&\le (c_0 + c_1|w_k(x)|)|v_k(x)| \\
&\le h(x) \quad \text{a.e. } x \in \Gamma_3
\end{align*}
with $h(\cdot) \in L^1(\Gamma_3)$ given by $h(x) = (c_0 + c_1 h_w(x)) h_v(x)$, by the Fatou lemma \cite[Theorem~1.64]{MOS2013} we obtain the desired inequality \eqref{j.1}.
\end{proof}
\noindent
We remark that in the main result (in Section~3) we use the property \eqref{j.1} for $k = h \to 0 = \overline{k}$, $k = \alpha \to \infty = \overline{k}$ ($m = 1$), and $k = (h,\alpha) \to (0,\infty) = \overline{k}$ ($m = 2$). This gives the useful convergence
which allows us to pass to the limit in the hemivariational term.

\section{Discretization by Finite Element Method}

Now we consider the finite element method. Let $\Omega \subset \mathbb{R}^d$ be a polygonal domain with a regular triangulation $\mathcal{T}^h$ consisting of affine-equivalent Lagrange
finite elements of class $C^0$ and of degree one (referred to as triangles of
type~(1) in the terminology of~\cite{Ciarlet2002}), i.e., the $\mathbb{P}_1$ elements
described below.
Here the parameter $h$ of the finite element approximation goes to zero (see \cite{BrennerScott2008}).
We can approximate the sets $V$, $V_0$, $K$ and $K_0$, respectively, by:

\begin{itemize}
\item[] $V^h = \{v^h \in C^0(\overline{\Omega}): v^h|_T \in \mathbb{P}_1(T) \quad \forall T \in \mathcal{T}^h\} \subset V$,
\item[] $V_0^h = \{v^h \in V^h: v^h|_{\Gamma_1} = 0\} \subset V_0$,
\item[] $K^h = \{v^h \in V_0^h: v^h|_{\Gamma_3} = b\} \subset V_0^h$,
\item[] $K_0^h = \{v^h \in V_0^h: v^h|_{\Gamma_3} = 0\}  \subset V_0^h$,
\end{itemize}
where $\mathbb{P}_1$ is the set of the polynomials of degree 1.
Throughout the rest of the paper we assume that the triangulation resolves the partition of the boundary, that is, $\Gamma_1$ and $\Gamma_3$ are unions of edges of $\mathcal{T}^h$.
Since $b$ is a constant, every $v^h \in K^h$ then satisfies $v^h|_{\Gamma_1} = 0$ and $v^h|_{\Gamma_3} = b$ pointwise, so that $K^h \subset K$, $K_0^h \subset K_0$, and the approximation is conforming.

It is clear that such a family $\{V^h\}$ of finite-dimensional vector spaces is an interior approximation to $V$, and a family $\{K^h\}$ of closed convex subsets of $V^h$ approximates $K$ in the sense of Mosco (see \cite[Sec.~4]{Ciarlet2002}):
\begin{equation}
\left\{
\begin{aligned}\label{!}
 &\text{for all } v \in K \ \text{there exists}\  v^h \in K^h \ \text{ such that } v^h \longrightarrow v \ \text{strongly in } V, \text{ as } h \rightarrow 0,
\\
 %\label{!!}
 & \text{if } v^h \in K^h \text{ and } v^h \rightarrow v \text{ weakly in } V, \text{ then } v \in K.
\end{aligned}
\right.
\end{equation}

Let $\Pi^h\colon C^0(\overline{\Omega}) \rightarrow V^h$ be the linear interpolation operator, i.e., $\Pi^h(v) \in V^h$ for all $v \in C^0(\overline{\Omega})$ and $\Pi^h(v(\Lambda)) = v(\Lambda)$ for all $\Lambda \in \Sigma^h$ = $\{ \Lambda \in \overline{\Omega} : \Lambda$ is a vertex of $T \in \mathcal{T}^h\}$.
Then, there exists a constant $c_p > 0$, independent of the parameter $h$, such that (cf. \cite{BrennerScott2008}):
\begin{itemize}
\item[] $\|v - \Pi^h(v)\|_{L^2(\Omega)} \le c_p h^r \|v\|_{H^r(\Omega)} \quad \forall v \in H^r(\Omega)$, $1 < r \le 2$,
\item[] $\|v - \Pi^h(v)\|_V \le c_p h^{r-1} \|v\|_{H^r(\Omega)} \quad \forall v \in H^r(\Omega)$, $1 < r \le 2$.
\end{itemize}

We consider the corresponding discrete problems:
\begin{itemize}
\item[($S^h_{\infty}$):]
find $u^h_{\infty} \in K^h$ \ such that \ $a(u^h_{\infty}, v^h - u^h_{\infty}) = L(v^h - u^h_{\infty})$ \quad for all $v^h \in K^h$,
\item[($S^h_{\alpha}$):] find $u^h_{\alpha} \in V^h_0$ \ such that \  $\displaystyle a(u^h_{\alpha}, v^h) + \alpha\int_{\Gamma_3} j^0(\gamma u^h_{\alpha}; \gamma v^h) \,d\Gamma \ge L(v^h)$ \quad for all $v^h \in V^h_0$.
\end{itemize}

\noindent We keep the assumptions from Section~2. Using similar arguments, as in the classical problems, guarantees that there exist the unique solution to Problem ($S^h_{\infty}$) and a solution to Problem ($S^h_{\alpha}$).

\

Our aim is to study the convergence of a sequence of solutions and obtain a complete commutative diagram, as in Figure~\ref{fig:diagram}. The diagram relates the boundary hemivariational inequalities ($S_{\alpha}$) and variational equality ($S_{\infty}$) with the discrete boundary hemivariational inequalities ($S^h_{\alpha}$) and the discrete variational equality ($S^h_{\infty}$). It is obtained by taking the limits
when $h\to 0$ (for fixed $\alpha>0$ and for $\alpha=\infty$), $\alpha\to\infty$ (for fixed $h>0$) and $(h,\alpha)\to (0,\infty)$.

\begin{figure}[ht!]
\begin{center}
\begin{tikzpicture}[>=Stealth, every node/.style={align=center}]

\node (S0) at (0,2.5) {\textbf{Problem $(S_\alpha)$}\\[1mm] a solution $u_\alpha \in V_0$};
\node (uh1) at (8,2.5) {\textbf{Problem $(S_\infty)$}\\[1mm] the unique solution $u_\infty \in K$};

\draw[->] (S0) -- (uh1);
\node at (3.8,2.8) {$\alpha \to \infty$};

\node (S3) at (0,-1.8) {\textbf{Problem $(S^h_\alpha)$}\\[1mm] a solution $u^h_\alpha \in V^h_0$};
\node (uh3) at (8,-1.8) {\textbf{Problem $(S^h_\infty)$}\\[1mm] the unique solution $u^h_\infty \in K^h$};

\draw[->] (S3) -- (uh3);
\node at (3.8,-1.5) {$\alpha \to \infty$};

\draw[->] (S3) -- (S0);
\node at (0.7,0.1) {$h \to 0$};
\draw[->] (uh3) -- (uh1);
\node at (8.7,0.1) {$h \to 0$};

\draw[->] (S3) -- (uh1);
\node at (3.8,0.5) {$(h,\alpha) \to (0,\infty)$};

\end{tikzpicture}
 \caption{Relationships between convergences.}
    \label{fig:diagram}
\end{center}
\end{figure}

\begin{Remark}
Under assumptions ($H_0$) and $H(j)$, if $u_\alpha$ is a solution to ($S_\alpha$) and $u_\infty$ is the~unique solution to ($S_\infty$), then
$$ u_\alpha \to u_\infty \quad \text{strongly in } V, \text{ as } \alpha \to \infty.$$
This convergence result is established in~\cite[Theorem~7]{GMOT2021}, therefore, we omit its proof.
\end{Remark}

In what follows, we focus on the remaining convergence results: the convergence of the sequence $\{u_\infty^h\}$, when $h\to 0$ (Lemma~\ref{L1}), and the convergence of the sequence $\{u_\alpha^h\}$ in three regimes, namely as $h \to 0$, $\alpha \to \infty$, and $(h,\alpha) \to (0,\infty)$ (Theorem~\ref{Tall}).

\

The following lemma presents convergence result for the elliptic variational equality.

\begin{Lemma} %$[h \rightarrow 0^+, \infty]$
\label{L1} Assume that ($H_0$) and $H(j)$ hold.
If $u_{\infty}^h$ is the unique solution to $(S_{\infty}^h)$ and $u_{\infty}$ is the unique solution to $(S_{\infty})$, then
\[
u_{\infty}^h \to u_{\infty} \quad \text{strongly in } V, \text{ as } h \to 0.
\]
\end{Lemma}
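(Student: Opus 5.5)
The plan is the classical Céa/Mosco argument for variational equalities posed on convex sets, as in Ciarlet~\cite[Sec.~4]{Ciarlet2002}: a uniform bound, weak compactness, identification of the limit through the Mosco property \eqref{!}, and then an energy argument that upgrades weak to strong convergence. Since $K^h\subset K$ is conforming, a direct Céa estimate would in fact also work, and I will use it as the shortcut.

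\textbf{Step 1 (uniform bound).} Fix some $v_0\in K$. By \eqref{!}(i) there exist $v_0^h\in K^h$ with $v_0^h\to v_0$ in $V$, so $\|v_0^h\|_V$ is bounded. Testing $(S_\infty^h)$ with $v^h=v_0^h$ gives
\[
a(u_\infty^h,u_\infty^h)=a(u_\infty^h,v_0^h)-L(v_0^h-u_\infty^h).
\]
Coercivity, continuity of $a$ and the bound on $L$ then yield $m_a\|u_\infty^h\|_V^2\le C(1+\|u_\infty^h\|_V)$. Hence $\|u_\infty^h\|_V\le C$ uniformly in $h$.

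\textbf{Step 2 (weak limit).} Extract a subsequence with $u_\infty^h\rightharpoonup u^*$ weakly in $V$. By \eqref{!}(ii), $u^*\in K$. For an arbitrary $v\in K$, take $v^h\in K^h$ with $v^h\to v$ strongly, as in \eqref{!}(i). Rewrite $(S_\infty^h)$ as
\[
a(u_\infty^h,u_\infty^h)=a(u_\infty^h,v^h)-L(v^h-u_\infty^h).
\]
Weak lower semicontinuity of the convex continuous map $w\mapsto a(w,w)$ gives
\[
a(u^*,u^*)\le\liminf a(u_\infty^h,u_\infty^h)=a(u^*,v)-L(v-u^*).
\]
In the limit of the right-hand side I use that $a(u_\infty^h,v^h)\to a(u^*,v)$, since this is a weak--strong pairing. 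Thus $a(u^*,v-u^*)\ge L(v-u^*)$ for all $v\in K$. Because $K=u^*+K_0$ is affine, I can take $v=u^*\pm w$ with $w\in K_0$, which yields equality. So $u^*$ solves $(S_\infty)$, and by the uniqueness in Remark~\ref{Rexist}, $u^*=u_\infty$. Since every subsequence has a further subsequence with this same limit, the whole family converges weakly.

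\textbf{Step 3 (strong convergence).} This is where some care is needed, though I expect it to be mild. Take $v^h\in K^h$ with $v^h\to u_\infty$ strongly. Then $u_\infty^h-v^h\in K_0^h$, so $(S_\infty^h)$, in its equivalent equality form on $K_0^h$, gives
\[
m_a\|u_\infty^h-v^h\|_V^2\le a(u_\infty^h-v^h,u_\infty^h-v^h)=L(u_\infty^h-v^h)-a(v^h,u_\infty^h-v^h).
\]
Here $u_\infty^h-v^h\rightharpoonup0$ weakly, while $L$ is a fixed continuous functional and $v^h$ converges strongly. Hence the right-hand side tends to $0$, and therefore
\[
\|u_\infty^h-u_\infty\|_V\le\|u_\infty^h-v^h\|_V+\|v^h-u_\infty\|_V\to0.
\]

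\textbf{Shortcut.} Alternatively, since $u_\infty-u_\infty^h$ is $a$-orthogonal to $K_0^h\subset K_0$ (Galerkin orthogonality), Céa's lemma gives
\[
\|u_\infty-u_\infty^h\|_V\le\frac{M}{m_a}\inf_{v^h\in K^h}\|u_\infty-v^h\|_V,
\]
and the right-hand side tends to $0$ by \eqref{!}(i).

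\textbf{Main obstacle.} The only nontrivial input is the density property \eqref{!}(i) for the constrained sets $K^h$, which the paper asserts. The hypotheses $H(j)$ are not needed for this lemma.
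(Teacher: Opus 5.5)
Your Steps 1--3 are correct and follow the paper's proof essentially step for step: a uniform bound obtained from a Mosco recovery sequence, then identification of the weak limit through weak lower semicontinuity of $v\mapsto a(v,v)$ and the test functions $u^*\pm w$ with $w\in K_0$, then strong convergence from coercivity against a recovery sequence for $u_\infty$. The only cosmetic differences are that you bound via an arbitrary $v_0\in K$, and that you invoke \eqref{!}(ii) directly where the paper re-derives $\chi\in K$ from compactness of the trace. Your C\'ea shortcut is also valid, and it is exactly what the paper proves right afterwards as Proposition~\ref{Pcea}; there the symmetry of $a$ (it is the inner product of $V_0$) even gives the best-approximation identity \eqref{cea} with constant $1$ in $\|\cdot\|_{V_0}$ rather than $M/m_a$.
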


\begin{proof}
Let $u_{\infty}^h$ be the unique solution to ($S_{\infty}^h$), i.e.,
\begin{equation}\label{1.1}
u_{\infty}^h \in K^h : \quad   a(u_{\infty}^h, v^h - u_{\infty}^h) = L(v^h - u_{\infty}^h) \quad \forall v^h \in K^h.
\end{equation}
We show that $u_{\infty}^h$ is bounded in $V$ for every $h>0$.
Indeed, by \eqref{!} for $u_{\infty} \in K$ there exists $v^h \in K^h$\  such that \  $v^h \rightarrow u_{\infty}$ strongly in $V$.
Hence, $v^h$ is bounded in $V$ as a strongly convergent sequence, i.e., $\|v^h\|_V \le C_1$ with a constant $C_1 > 0$ independent of $h>0$.
Using \eqref{1.1} and the facts that $a$ is coercive and continuous, and $L$ is continuous, we obtain:
\begin{align*}
m_a \|v^h - u_{\infty}^h\|_V^2 &\le a(v^h - u_{\infty}^h, v^h - u_{\infty}^h) \\
&= a(v^h, v^h - u_{\infty}^h) - a(u_{\infty}^h, v^h - u_{\infty}^h) \\
&= a(v^h, v^h - u_{\infty}^h) - L(v^h - u_{\infty}^h) \\
&\le (M\|v^h\|_V + \|L\|_{V^*}) \|v^h - u_{\infty}^h\|_V,
\end{align*}
and subsequently
\begin{align*}
\|u_{\infty}^h\|_V &\le \|v^h - u_{\infty}^h\|_V + \|v^h\|_V \\
&\le \frac{1}{m_a} (M C_1 + \|L\|_{V^*}) + C_1 \\
&=: C_2,
\end{align*}
which means that $\{u_{\infty}^h\}_h$ is bounded in $V$.
Since $V$ is a reflexive space, then for a subsequence, indexed in the same way, we have
\begin{align*}
u_{\infty}^h \longrightarrow \chi \quad \text{weakly in } V \text{ with } \chi \in V.
\end{align*}
Moreover, by the compactness of the trace operator, we have $u_{\infty}^h|_{\Gamma} \rightarrow \chi|_{\Gamma}$ in $L^2(\Gamma)$, as $h \rightarrow 0$.
From the fact that $u_{\infty}^h|_{\Gamma_1} = 0$ and $u_{\infty}^h|_{\Gamma_3} = b$, we get $\chi|_{\Gamma_1} = 0$ and $\chi|_{\Gamma_3} = b$, respectively, so $\chi \in V_0$ and $\chi \in K$.
Now we show that $\chi \in K$ is a solution to ($S_{\infty}$).
To this end, let $v \in K$ and $v^h \in K^h$ such that $v^h \rightarrow v$ strongly in $V$, as $h \rightarrow 0$ (cf. \eqref{!}).
We take the upper limit of \eqref{1.1}
\begin{align*}
\lim_{h \rightarrow 0} L(v^h - u_{\infty}^h) = \lim_{h \rightarrow 0} a(u_{\infty}^h, v^h) + \limsup_{h \rightarrow 0} (-a(u_{\infty}^h, u_{\infty}^h)) = a(\chi, v) - \liminf_{h \rightarrow 0} a(u_{\infty}^h, u_{\infty}^h).
\end{align*}
Using the weak lower semicontinuity of the functional $V \ni v \mapsto a(v, v) \in \mathbb{R}$ we have
\begin{equation}\label{1.4}
L(v - \chi) \le a(\chi, v) - a(\chi, \chi) = a(\chi, v - \chi) \quad \forall v \in K.
\end{equation}
Let $w \in K_0$.
Taking  as a test function $v = \chi \pm w \in K$ in \eqref{1.4}, we conclude $a(\chi, w) = L(w)$ for all $w \in K_0$, which is equivalent to
\begin{align*}
a(\chi, v - \chi) = L(v - \chi) \quad \forall v \in K.
\end{align*}
Hence, $\chi \in K$ is a solution to ($S_{\infty}$), and by uniqueness of the solution to ($S_{\infty}$) we have $\chi = u_{\infty}$, which gives
$$u_{\infty}^h \rightarrow \chi = u_{\infty} \text{ weakly in } V, \text{ as } h \rightarrow 0.$$

Finally, we show that $u_{\infty}^h \rightarrow u_{\infty}$ strongly in $V$, as $h \rightarrow 0$.
We take $v^h \in K^h$ such that $v^h \rightarrow u_{\infty}$ strongly in $V$, as $h \rightarrow 0$ (cf.\ \eqref{!}).
Using the coercivity of the form $a$ and \eqref{1.1}, we have
\begin{align*}
m_a \|u_{\infty}^h - v^h\|_V^2 &\le a(u_{\infty}^h - v^h, u_{\infty}^h - v^h) \\
&= a(u_{\infty}^h, u_{\infty}^h - v^h) - a(v^h, u_{\infty}^h - v^h) \\
&= L(u_{\infty}^h - v^h) - a(v^h, u_{\infty}^h - v^h).
\end{align*}
Since $u_{\infty}^h - v^h \rightarrow u_{\infty} - u_{\infty} = 0$ weakly in $V$ and $v^h \rightarrow u_\infty$ strongly in $V$, we deduce $L(u_{\infty}^h - v^h) \rightarrow 0$ and $a(v^h, u_{\infty}^h - v^h) \rightarrow 0$, as $h \rightarrow 0$.
Thus
\begin{align*}
    \|u_{\infty}^h - v^h\|_V \rightarrow 0, \text{ as } h \rightarrow 0,
\end{align*}
and consequently
\begin{align*}
0 \le \|u_{\infty}^h - u_{\infty}\|_V \le \|u_{\infty}^h - v^h\|_V + \|v^h - u_{\infty}\|_V \rightarrow 0, \quad \text{as } h \rightarrow 0,
\end{align*}
which implies $u_{\infty}^h \to u_{\infty}$ strongly in $V$, as $h \rightarrow 0$.
\end{proof}

Since the approximation is conforming, the convergence stated in Lemma~\ref{L1} can be complemented by an error estimate of optimal order.

\begin{Proposition}\label{Pcea}
Assume that ($H_0$) holds.
If $u_{\infty} \in K$ and $u_{\infty}^h \in K^h$ are the unique solutions to ($S_{\infty}$) and ($S_{\infty}^h$), respectively, then $u_{\infty}^h$ is the best approximation of $u_{\infty}$ in $K^h$ with respect to $\|\cdot\|_{V_0}$, that is,
\begin{align}\label{cea}
\|u_{\infty} - u_{\infty}^h\|_{V_0} = \min_{v^h \in K^h} \|u_{\infty} - v^h\|_{V_0} .
\end{align}
If, in addition, $u_{\infty} \in H^r(\Omega)$ with $\max\{1, d/2\} < r \le 2$, then
\begin{align}\label{cea2}
\|u_{\infty} - u_{\infty}^h\|_V \le \frac{c_p}{\sqrt{m_a}}\, h^{r-1}\, \|u_{\infty}\|_{H^r(\Omega)},
\end{align}
that is, the convergence of Lemma~\ref{L1} is of optimal order.
\end{Proposition}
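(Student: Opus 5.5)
The plan is a Galerkin-orthogonality argument (C\'ea's lemma in the energy norm $\|\cdot\|_{V_0}$), followed by an interpolation estimate.

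\textbf{Step 1: Galerkin orthogonality.} Since $K^h\subset K$, for any $v^h, w^h\in K^h$ the difference $w^h-v^h$ lies in $K_0^h\subset K_0$. Testing $(S_\infty)$ with $w=w^h-u_\infty^h\in K_0$ gives $a(u_\infty,w)=L(w)$. Testing $(S_\infty^h)$ with $v^h=u_\infty^h\pm w$ (both lie in $K^h$) gives $a(u_\infty^h,w)=L(w)$. Subtracting,
\begin{equation*}
a(u_\infty-u_\infty^h,w^h)=0\quad \forall w^h\in K_0^h .
\end{equation*}

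\textbf{Step 2: best approximation.} Fix any $v^h\in K^h$ and write $u_\infty-v^h=(u_\infty-u_\infty^h)+(u_\infty^h-v^h)$, where the second term lies in $K_0^h$. Since $a$ is the inner product inducing $\|\cdot\|_{V_0}$, orthogonality yields the Pythagorean identity
\begin{equation*}
\|u_\infty-v^h\|_{V_0}^2=\|u_\infty-u_\infty^h\|_{V_0}^2+\|u_\infty^h-v^h\|_{V_0}^2 .
\end{equation*}
Hence $\|u_\infty-u_\infty^h\|_{V_0}\le\|u_\infty-v^h\|_{V_0}$, with equality at $v^h=u_\infty^h\in K^h$. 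The minimum is therefore attained, which proves \eqref{cea}. Note that only $(H_0)$ is used here, through the existence and uniqueness in Remark~\ref{Rexist}.

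\textbf{Step 3: error estimate.} Assume $u_\infty\in H^r(\Omega)$ with $r>d/2$. By Sobolev embedding $u_\infty\in C^0(\overline\Omega)$, so $\Pi^h u_\infty$ is defined. The delicate point, and the main obstacle, is to check that $\Pi^h u_\infty\in K^h$. This requires the pointwise boundary values of the continuous representative to agree with the traces: $u_\infty=0$ on $\Gamma_1$ and $u_\infty=b$ on $\Gamma_3$ hold as traces, and for continuous functions the trace coincides with the restriction. Consequently every vertex on $\Gamma_1$ (respectively $\Gamma_3$) gets nodal value $0$ (respectively $b$). Because the mesh resolves $\Gamma_1,\Gamma_3$ and the constraint value is constant, the piecewise linear interpolant equals $0$ (respectively $b$) on each boundary edge, so $\Pi^h u_\infty\in K^h$.

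Then, using $\|v\|_{V_0}\le\|v\|_V$, \eqref{cea}, and the interpolation bound,
\begin{equation*}
\sqrt{m_a}\,\|u_\infty-u_\infty^h\|_V\le\|u_\infty-u_\infty^h\|_{V_0}\le\|u_\infty-\Pi^hu_\infty\|_{V_0}\le\|u_\infty-\Pi^hu_\infty\|_V\le c_p h^{r-1}\|u_\infty\|_{H^r(\Omega)} .
\end{equation*}
Dividing by $\sqrt{m_a}$ gives \eqref{cea2}. The rate is optimal because it matches the interpolation error of $\mathbb P_1$ elements.
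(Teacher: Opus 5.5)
Your proposal is correct and follows the paper's proof: you obtain Galerkin orthogonality by testing both problems with elements of $K_0^h\subset K_0$, you get the best-approximation property because $a$ is the inner product of $V_0$ and $K^h$ is affine (you write out the Pythagorean identity that the paper leaves implicit in ``orthogonal projection''), and you then take $v^h=\Pi^h(u_\infty)\in K^h$ together with the two norm comparisons. Your remark that for continuous functions the trace coincides with the restriction is the justification behind the paper's claim that the interpolant takes the values $0$ and $b$ at the vertices on $\Gamma_1$ and $\Gamma_3$.
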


\begin{proof}
For $v^h \in K^h$ we have $v^h - u_{\infty}^h \in K_0^h \subset K_0$, so ($S_{\infty}$) and ($S_{\infty}^h$) both give $a(\cdot,\, v^h - u_{\infty}^h) = L(v^h - u_{\infty}^h)$ and, subtracting,
\begin{align}\label{galerkin}
a(u_{\infty} - u_{\infty}^h,\, v^h - u_{\infty}^h) = 0 \qquad \forall\, v^h \in K^h .
\end{align}
Since $a$ is the inner product of $V_0$ and $K^h$ is an affine subset of $V_0$, \eqref{galerkin} states precisely that $u_{\infty}^h$ is the orthogonal projection of $u_{\infty}$ onto $K^h$, which is \eqref{cea}.
For \eqref{cea2}, $r > d/2$ gives $H^r(\Omega) \hookrightarrow C^0(\overline{\Omega})$, so $\Pi^h(u_{\infty})$ is well defined; being affine on each edge and equal to $0$ and to $b$ at every vertex on $\Gamma_1$ and $\Gamma_3$, it belongs to $K^h$.
Taking $v^h = \Pi^h(u_{\infty})$ in \eqref{cea} and using $\|\cdot\|_{V_0} \le \|\cdot\|_V$ on the right-hand side and $\|\cdot\|_V \le m_a^{-1/2}\|\cdot\|_{V_0}$ on the left gives the assertion.
\end{proof}

The analysis of the remaining limits requires higher regularity of solutions than $H^1(\Omega)$. In general, a solution to a mixed elliptic boundary value problem belongs to $H^r(\Omega)$ with $1 < r < \tfrac{3}{2} - \varepsilon$ ($\varepsilon > 0$) (cf.~\cite{LanCap2008, Shamir1968}).
In what follows, we present the main convergence results.

\begin{Theorem} %$[S_{\alpha}^h \rightarrow S_{\alpha}$, $S_{\alpha}^h \rightarrow S_{\infty}^h$, $S_{\alpha}^h \rightarrow S_{\infty}]$
\label{Tall}
Let ($H_0$) and $H(j)$ hold.
We assume the unique solution to ($S_\infty$) has the regularity $u_\infty \in H^r(\Omega)$, $1 < r \leq 2$.
If $u_{\alpha}^h$ is a solution to ($S_{\alpha}^h$), then:
\begin{enumerate}
\item $u_{\alpha}^h \to u_{\alpha}$ strongly in $V$, as $h \to 0$ (with $\alpha > 0$ fixed), where $u_{\alpha}$ is a solution to ($S_{\alpha}$),
\item $u_{\alpha}^h \to u_{\infty}^h$ strongly in $V$, as $\alpha \to \infty$ (with $h>0$ fixed), where $u_{\infty}^h$ is the unique solution to ($S_{\infty}^h$),
\item $u_{\alpha}^h \to u_{\infty}$ strongly in $V$, as $(h, \alpha) \to (0, \infty)$, where $u_{\infty}$ is the unique solution to ($S_{\infty}$).
\end{enumerate}
\end{Theorem}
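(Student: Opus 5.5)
I would treat all three items with one template: a uniform a priori bound, extraction of a weakly convergent subnet, identification of the limit through Lemma~\ref{L_j.1} together with $H(j)(d)$--$(e)$, and then an upgrade to strong convergence. The template is written for the hardest case, item~3, where $k=(h,\alpha)\to(0,\infty)$; items~1 and~2 are obtained by freezing one parameter.

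\emph{Step 1 (uniform bound).} Test $(S^h_\alpha)$ with $v^h=w^h-u^h_\alpha$, where $w^h=\Pi^h(u_\infty)\in K^h$. If $r\le d/2$, I would replace $\Pi^h$ by any Mosco recovery sequence from \eqref{!}. Positive homogeneity of $j^0(r;\cdot)$ and $w^h|_{\Gamma_3}=b$ give, pointwise on $\Gamma_3$,
\[
j^0(\gamma u^h_\alpha;\gamma(w^h-u^h_\alpha))=j^0(\gamma u^h_\alpha;b-\gamma u^h_\alpha)\le 0
\]
by $H(j)(d)$. Hence
\[
a(u^h_\alpha,u^h_\alpha-w^h)+\alpha\int_{\Gamma_3}\bigl(-j^0(\gamma u^h_\alpha;b-\gamma u^h_\alpha)\bigr)\,d\Gamma\le L(u^h_\alpha-w^h).
\]
The penalty term here is nonnegative. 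Coercivity of $a$ and boundedness of $w^h$ then give $\|u^h_\alpha\|_V\le C$ uniformly in $(h,\alpha)$, and also
\[
0\le\int_{\Gamma_3}-j^0(\gamma u^h_\alpha;b-\gamma u^h_\alpha)\,d\Gamma\le C/\alpha .
\]
For item~1, where $\alpha$ is fixed and there is no Dirichlet target, I would instead use the standard bound from $H(j)(c)$ with $v^h=-u^h_\alpha$. This requires the usual smallness $\alpha c_1\|\gamma\|^2<m_a$, or, more naturally, the same test with $w^h$ as above, which avoids that condition altogether.

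\emph{Step 2 (identifying the limit).} Take a subnet with $u^h_\alpha\rightharpoonup\chi$ in $V$, so $\gamma u^h_\alpha\to\gamma\chi$ in $L^2(\Gamma_3)$. Apply Lemma~\ref{L_j.1} with $w_k=u^h_\alpha$ and $v_k=w^h-u^h_\alpha\rightharpoonup u_\infty-\chi$ (traces $b-\gamma u^h_\alpha$). This yields
\[
\int_{\Gamma_3}-j^0(\gamma\chi;b-\gamma\chi)\le\liminf\int_{\Gamma_3}-j^0(\gamma u^h_\alpha;b-\gamma u^h_\alpha)=0 .
\]
By $H(j)(d)$ the integrand is nonnegative, so it vanishes a.e., and by $H(j)(e)$ we get $\gamma\chi=b$ on $\Gamma_3$. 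Hence $\chi\in K$. To show $\chi$ solves $(S_\infty)$, I would take $v\in K$ with a recovery sequence $v^h\in K^h$ and test $(S^h_\alpha)$ with $v^h-u^h_\alpha$. Exactly as above, the penalty term is $\le0$ after multiplication by $\alpha$, and this is where the unbounded factor $\alpha$ becomes harmless, because it multiplies a term of the right sign. So $a(u^h_\alpha,v^h-u^h_\alpha)\ge L(v^h-u^h_\alpha)$. Weak lower semicontinuity of $a(\cdot,\cdot)$ gives $a(\chi,v-\chi)\ge L(v-\chi)$ for all $v\in K$. The $\pm w$ trick of Lemma~\ref{L1} then gives $\chi=u_\infty$, and uniqueness yields convergence of the whole family. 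For item~2 the same argument runs inside the finite-dimensional space $V^h_0$ with $K^h$ and gives $u^h_\infty$. For item~1 one uses density of $\bigcup_h V^h_0$ and Lemma~\ref{L_j.1} with $v_k=v^h$ to pass to the limit in $\alpha\int j^0(\gamma u^h_\alpha;\gamma v^h)$, obtaining a solution $u_\alpha$ of $(S_\alpha)$. In item~1 convergence holds only along subsequences unless the solution is unique, which I would interpret accordingly.

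\emph{Step 3 (strong convergence).} Use
\[
m_a\|u^h_\alpha-w^h\|^2\le a(u^h_\alpha,u^h_\alpha-w^h)-a(w^h,u^h_\alpha-w^h)\le L(u^h_\alpha-w^h)-a(w^h,u^h_\alpha-w^h)\to0 ,
\]
where the first bound on $a(u^h_\alpha,u^h_\alpha-w^h)$ comes from Step~1, dropping the nonnegative penalty term. This gives $u^h_\alpha-w^h\rightharpoonup0$ with $w^h\to u_\infty$ strongly. Item~2 is analogous with $w^h=u^h_\infty$. For item~1, test with $v^h-u^h_\alpha$, where $v^h\to u_\alpha$ strongly, and use Lemma~\ref{L_j.1} to show $\limsup\alpha\int j^0(\gamma u^h_\alpha;\gamma(v^h-u^h_\alpha))\le0$.

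\textbf{Main obstacle.} The delicate point is Step~2's recovery of $\gamma\chi=b$. It needs the liminf of a nonnegative integrand whose arguments move simultaneously, which is exactly what Lemma~\ref{L_j.1} supplies. It also requires that the test functions keep boundary value exactly $b$ on $\Gamma_3$, which holds because the triangulation resolves $\Gamma_3$ and $K^h\subset K$.
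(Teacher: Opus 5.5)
Your proposal is correct and follows the paper's proof essentially step by step. The a priori bound comes from testing with $w^h-u^h_\alpha$, $w^h\in K^h$, and using $H(j)(d)$. The $O(1/\alpha)$ control of $\int_{\Gamma_3}-j^0(\gamma u^h_\alpha;b-\gamma u^h_\alpha)\,d\Gamma$, together with Lemma~\ref{L_j.1} and $H(j)(d)$--$(e)$, recovers the trace $b$ on $\Gamma_3$. The limit is then identified from the sign of the penalty term, weak lower semicontinuity and the $\pm$ trick, and strong convergence follows from coercivity against a recovery sequence.

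The differences are cosmetic. You read the $C/\alpha$ bound directly off the Step~1 inequality rather than from the separate estimate \eqref{2.10}. Your remark that a Mosco recovery sequence can replace $\Pi^h(u_\infty)$ is a correct minor refinement: the interpolant is the only place the $H^r$ regularity enters, so that hypothesis is not really needed. You can also drop the smallness-condition alternative for item~1, since the Step~1 bound already holds uniformly in all $(h,\alpha)$.
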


\begin{proof}
Let $h>0$ and $\alpha>0$.
Let $u_{\alpha}^h \in V_0^h$ be a solution to ($S_{\alpha}^h$), i.e.,
\begin{equation}\label{2.1}
a(u_{\alpha}^h, v^h) + \alpha\int_{\Gamma_3} j^0(\gamma u_{\alpha}^h; \gamma v^h) \,d\Gamma \ge L(v^h) \quad \forall v^h \in V_0^h.
\end{equation}

Let $u_{\infty} \in K \cap H^r(\Omega)$ ($1 < r \le 2$) be the unique solution to ($S_{\infty}$).
Considering the linear interpolation operator $\Pi^h\colon V_0 \to V_0^h$, we know that
\begin{align}
    \label{2.2}
    \Pi^h(u_{\infty}) \longrightarrow u_{\infty} \text{ strongly in } V, \text{ as } h \rightarrow 0 \text{ and } \Pi^h(u_{\infty})|_{\Gamma_3} = b,
\end{align}
so we have
$$\Pi^h(u_{\infty}) \in K^h \subset V_0^h \text{ and } \|\Pi^h(u_{\infty})\|_V \le \tilde{C}, \text{ with } \tilde{C} > 0 \text{ independent of } h.$$

We claim the boundedness of $u_{\alpha}^h$ in $V$.
Indeed, we denote $w^h = \Pi^h(u_\infty)$, take $v^h = w^h - u_{\alpha}^h \in V_0^h$ in \eqref{2.1}, and obtain
\begin{align*}
a(u_{\alpha}^h, w^h - u_{\alpha}^h) + \alpha\int_{\Gamma_3} j^0(\gamma u_{\alpha}^h; \gamma (w^h - u_{\alpha}^h))\, d\Gamma \ge L(w^h - u_{\alpha}^h).
\end{align*}
Next, by \eqref{2.2} and $H(j)(d)$ we have
\begin{align}
\label{2.3}
a(u_{\alpha}^h, u_{\alpha}^h - w^h) \le \alpha\int_{\Gamma_3} j^0(\gamma u_{\alpha}^h; b- \gamma u_{\alpha}^h))\, d\Gamma + L(u_{\alpha}^h - w^h)
\le L(u_{\alpha}^h - w^h).
\end{align}
Then, by coercivity of $a$ together with continuity of $a$ and $L$, and \eqref{2.3}, we get
\begin{align*}
m_a \|u_{\alpha}^h - w^h\|_V^2 &\le a(u_{\alpha}^h - w^h, u_{\alpha}^h - w^h) \\
&= a(u_{\alpha}^h, u_{\alpha}^h - w^h) - a(w^h, u_{\alpha}^h - w^h) \\
&\le (\|L\|_{V^*} + M\|w^h\|_V) \|u_{\alpha}^h - w^h\|_V.
\end{align*}
Since $\|w^h\|_V=\|\Pi^h(u_{\infty})\|_V\leq \tilde{C}$, we conclude
\begin{equation}
\label{2.4}
\|u_{\alpha}^h - w^h\|_V \le \frac{1}{m_a} (\|L\|_{V^*} + M\tilde{C}) =: \overline{C}
\end{equation}
and
\begin{align*}
\|u_{\alpha}^h\|_V \le \|u_{\alpha}^h - w^h\|_V + \|w^h\|_V \le \overline{C} + \tilde{C} =: C_0.
\end{align*}
Hence,
\begin{align}
\label{2.5}
\|u_{\alpha}^h\|_V \le C_0 \quad \text{ for all } h>0,\ \alpha>0,
\end{align}
where $C_0>0$ is independent of $h>0$ and $\alpha>0$.

\

Then, for subsequences, if necessary, indexed in the same way, we have:
\begin{enumerate}
\item $\exists\ \zeta_{\alpha} \in V$ such that $u_{\alpha}^h \rightarrow \zeta_{\alpha}$ weakly in $V$, as $h \rightarrow 0,$ for every fixed $\alpha>0$,
\item $\exists\ \xi^h \in V^h$ such that $u_{\alpha}^h \rightarrow \xi^h$ weakly in $V$, as $\alpha \rightarrow \infty$ for every fixed $h>0$,
\item $\exists\ \eta \in V$ such that $u_{\alpha}^h \rightarrow \eta$ weakly in $V$, as $(h, \alpha) \rightarrow (0, \infty)$.
\end{enumerate}
We observe that since the trace $\gamma: V \rightarrow L^2(\Gamma)$ is compact and $u_{\alpha}^h|_{\Gamma_1} = 0$, as an element of $V_0^h \subset V_0$, we have:
\begin{itemize}
\item[a)] $\zeta_{\alpha} \in V_0$ for every $\alpha > 0$,
\item[b)] $\xi^h \in V_0^h$ for every $h>0$,
\item[c)] $\eta \in V_0$.
\end{itemize}
We need to prove that:
\begin{enumerate}
\item[(1.)] $\zeta_{\alpha} \in V_0$ is a solution to ($S_{\alpha}$) and $\|u_{\alpha}^h - \zeta_{\alpha}\|_V \rightarrow 0$, as $h \rightarrow 0$, for every fixed $\alpha>0$,
\item[(2.)] $\xi^h \in V_0^h$ is a solution to ($S_{\infty}^h$), that is $\xi^h = u_{\infty}^h$ by uniqueness, and $\|u_{\alpha}^h - \xi^h\|_V \rightarrow 0$, as $\alpha \rightarrow \infty$, for every fixed $h>0$,
\item[(3.)] $\eta \in V_0$ is a solution to ($S_{\infty}$), that is $\eta = u_{\infty}$ by uniqueness, and $\|u_{\alpha}^h - \eta\|_V \rightarrow 0$, as $(h, \alpha) \rightarrow (0, \infty)$.
\end{enumerate}

\noindent
We start with the proof of (1.). We need to show that for every fixed $\alpha>0$:
\begin{enumerate}
\item[(i)] $\zeta_{\alpha} \in V_0$ is a solution to ($S_{\alpha}$),
\item[(ii)] $\|u_{\alpha}^h - \zeta_{\alpha}\|_V \rightarrow 0,$ as $h \rightarrow 0$.
\end{enumerate}
Let $v \in V_0$.
Since $V_0^h$ is an interior approximation to $V_0$, there exists a sequence $v^h \in V_0^h$ such that $v^h \rightarrow v$ strongly in $V$, as $h \rightarrow 0$.
Taking such $v^h \in V_0^h$ in \eqref{2.1}, the upper limit when $h \rightarrow 0$ and applying the property \eqref{j.1} with $v_k=v^h, w_k = u_{\alpha}^h \rightarrow \zeta_{\alpha} = w$, as $k = h \rightarrow 0$, we have
\begin{align*}
a(\zeta_{\alpha}, v) + \alpha\int_{\Gamma_3} j^0(\gamma \zeta_{\alpha}; \gamma v)\,d\Gamma \ge L(v) \quad \forall v \in V_0,
\end{align*}
which means that $\zeta_{\alpha}$ is a solution to ($S_{\alpha}$).
Hence, we may assume that $\zeta_{\alpha} = u_{\alpha}$ is a solution to ($S_{\alpha}$).

Next, again by the fact that $V_0^h$ is an interior approximation to $V_0$, we consider a~sequence $\zeta_{\alpha}^h \in V_0^h$ such that $\zeta_{\alpha}^h \rightarrow \zeta_{\alpha}$ strongly in $V$, as $h \rightarrow 0$.
By the coercivity of $a$, we get
\begin{align}
\label{2.6}
\begin{split}
m_a \|\zeta_{\alpha} - u_{\alpha}^h\|_V^2 &\le a(\zeta_{\alpha} - u_{\alpha}^h, \zeta_{\alpha} - u_{\alpha}^h) \\
&= a(\zeta_{\alpha}, \zeta_{\alpha} - u_{\alpha}^h)
- a(u_{\alpha}^h, \zeta_{\alpha} - \zeta_{\alpha}^h)
- a(u_{\alpha}^h, \zeta_{\alpha}^h - u_{\alpha}^h).
\end{split}
\end{align}
The first two terms tend to $0$. Indeed,
\begin{align}
\label{2.7}
&a(\zeta_{\alpha}, \zeta_{\alpha} - u_{\alpha}^h) \rightarrow 0, \text{ as } h \rightarrow 0 \text{ since } u_{\alpha}^h \rightarrow \zeta_{\alpha} \text{ weakly in } V,\\
\label{2.8}
&|a(u_{\alpha}^h, \zeta_{\alpha} - \zeta_{\alpha}^h)| \le M \|u_{\alpha}^h\|_V \|\zeta_{\alpha} - \zeta_{\alpha}^h\|_V \rightarrow 0.
\end{align}
Taking $v^h = \zeta_{\alpha}^h - u_{\alpha}^h \in V_0^h$ in \eqref{2.1}, we get
\begin{equation*}
-a(u_{\alpha}^h, \zeta_{\alpha}^h - u_{\alpha}^h) \le \alpha\int_{\Gamma_3} j^0(\gamma u_{\alpha}^h; \gamma (\zeta_{\alpha}^h - u_{\alpha}^h))\,d\Gamma - L(\zeta_{\alpha}^h - u_{\alpha}^h).
\end{equation*}
Because $\zeta_{\alpha}^h - u_{\alpha}^h \rightarrow \zeta_{\alpha} - \zeta_{\alpha} = 0$ weakly in $V$, as $h \rightarrow 0$, and by \eqref{j.1} (applying to $w_k = u_{\alpha}^h \rightarrow \zeta_{\alpha} = w$ weakly in $V$, $v_k = \zeta_{\alpha}^h - u_{\alpha}^h \rightarrow \zeta_{\alpha} - \zeta_{\alpha} = 0$ weakly in $V$, as $k=h \rightarrow 0$), we have
\begin{align*}
\limsup_{h \rightarrow 0} \int_{\Gamma_3} j^0(\gamma u_{\alpha}^h; \gamma (\zeta_{\alpha}^h - u_{\alpha}^h))\, d\Gamma \le \int_{\Gamma_3} j^0(\gamma \zeta_{\alpha}; 0)\, d\Gamma = 0,
\end{align*}
so
\begin{align*}
- \liminf_{h \rightarrow 0} a(u_{\alpha}^h, \zeta_{\alpha}^h - u_{\alpha}^h) = \limsup_{h \rightarrow 0} (-a(u_{\alpha}^h, \zeta_{\alpha}^h - u_{\alpha}^h)) \leq 0.
\end{align*}
Hence, from \eqref{2.6}-\eqref{2.8}, we obtain
\begin{align*}
0 \leq m_a \limsup_{h \rightarrow 0} \|\zeta_{\alpha} - u_{\alpha}^h\|_V^2 \leq \limsup_{h \rightarrow 0} (-a(u_{\alpha}^h, \zeta_{\alpha}^h - u_{\alpha}^h)) \leq 0,
\end{align*}
which implies that $\|\zeta_{\alpha} - u_{\alpha}^h\|_V \rightarrow 0$, as $h \rightarrow 0$.
Consequently, $u_{\alpha}^h \to \zeta_{\alpha}$ strongly in $V$ for every fixed $\alpha > 0$, where $\zeta_\alpha$ is the solution to Problem $(S_{\alpha})$.
%\end{proof}

\

Before proving (2.) and (3.) we claim that
$$ \text{for all } w_k = u_{\alpha}^h \rightarrow w \text{ weakly in } V \text{ as } k \rightarrow \overline{k} \in \overline{\mathbb{R}}^m \text{ we have } w(x) = b \text{ for a.e.\ } x \in \Gamma_3$$
(with $k = \alpha \rightarrow \infty$ ($m=1$) and $k = (h, \alpha) \rightarrow (0, \infty)$ ($m=2$), respectively).
Indeed, for $u_{\infty} \in K$ by \eqref{!} there exists $w^h \in K^h \subset V_0^h$ such that $w^h \rightarrow u_{\infty}$ strongly in $V$, as $h \rightarrow 0$.
We take $v_k = w^h - w_k = w^h - u_{\alpha}^h \in V_0^h$ in \eqref{2.1} and obtain
\begin{align*}
a(w_k, v_k) + \alpha\int_{\Gamma_3} j^0(\gamma w_k; \gamma v_k)\,d\Gamma \ge L(v_k) \quad \forall v_k \in V_0^h.
\end{align*}
By the continuity of the forms $a$ and $L$, and the boundedness of $\|v_k\|_V \le \overline{C}$ (see \eqref{2.4}) and $\|w_k\|_V \le C_0$ (see \eqref{2.5}), we obtain
\begin{align*}
-\alpha\int_{\Gamma_3} j^0(\gamma w_k; \gamma v_k)\,d\Gamma &\le a(w_k, v_k) - L(v_k) \\
&\le (M\|w_k\|_V + \|L\|_{V^*}) \|v_k\|_V \\
&\le (MC_0 + \|L\|_{V^*}) \overline{C} \\
&=: C_3
\end{align*}
with $C_3 > 0$ independent of $h>0$ and $\alpha>0$.
Hence,
\begin{equation}\label{2.10}
-\int_{\Gamma_3} j^0(\gamma w_k; \gamma v_k)\,d\Gamma \le \frac{C_3}{\alpha} \quad \forall \alpha>0, k \in \overline{\mathbb{R}}^m.
\end{equation}
On the other hand, from \eqref{j.1} and $H(j)(d)$, since $v_k=w^h-u_{\alpha}^h\rightarrow u_{\infty}-w$ weakly in V, we have
\begin{align*}
\limsup_{k \rightarrow \overline{k}} \int_{\Gamma_3} j^0(\gamma w_k; \gamma v_k)\,d\Gamma \le \int_{\Gamma_3} j^0(\gamma w; \gamma u_\infty - \gamma w)\,d\Gamma
= \int_{\Gamma_3} j^0(\gamma w; b - \gamma w)\,d\Gamma
\le 0.
\end{align*}
Hence, by \eqref{2.10}
\begin{align*}
0 \le -\int_{\Gamma_3} j^0(\gamma w; b - \gamma w)\,d\Gamma
&\le - \limsup_{k \rightarrow \overline{k}} \int_{\Gamma_3} j^0(\gamma w_k; \gamma v_k)\,d\Gamma \\
&= \liminf_{k \rightarrow \overline{k}} \left(-\int_{\Gamma_3} j^0(\gamma w_k; \gamma v_k) \, d\Gamma \right) \\
&\le \lim_{\alpha \rightarrow \infty} \frac{C_3}{\alpha} = 0,
\end{align*}
which implies $\displaystyle \int_{\Gamma_3} j^0(\gamma w; b - \gamma w)\,d\Gamma = 0$.
Now, by $H(j)(d)$ we immediately get $$j^0(w(x); b - w(x)) = 0 \quad \text{ for a.e. } x \in \Gamma_3.$$
Using $H(j)(e)$, we obtain $w(x) = b$ for a.e. $x \in \Gamma_3$.
Now, it is enough to apply this claim to
\vspace{-3mm}
\begin{align*}
w_k = u_{\alpha}^h \rightarrow \xi^h = w \text{ weakly in } V,  \text{ as }   k = \alpha \rightarrow \infty,
\end{align*}
and
\vspace{-5mm}
\begin{align*}
w_k = u_{\alpha}^h \rightarrow \eta = w \text{ weakly in } V,  \text{ as }  k = (h, \alpha) \rightarrow (0, \infty),
\end{align*}
to see that
\begin{align*}
\xi^h(x) = b \quad \text{and} \quad \eta(x) = b\quad \text{ for a.e. } x \in \Gamma_3,
\end{align*}
which guarantees that $\xi^h \in K^h$ and $\eta \in K$, respectively.

\vspace{5mm}
Now we continue with the proof of (2.).
We show that $\xi^h \in K^h$ is a solution to ($S_{\infty}^h$).
To this end, let $w^h \in K^h \subset V^h$ be fixed and take $v^h = w^h - u_{\alpha}^h \in V_0^h$ in \eqref{2.1}
\begin{align*}
a(u_{\alpha}^h, w^h - u_{\alpha}^h) + \alpha\int_{\Gamma_3} j^0(\gamma u_{\alpha}^h; \gamma w^h - \gamma u_{\alpha}^h)\,d\Gamma \ge L(w^h - u_{\alpha}^h),
\end{align*}
and using $H(j)(d)$, we have
\begin{equation}\label{2.12}
a(u_{\alpha}^h, w^h - u_{\alpha}^h) \ge L(w^h - u_{\alpha}^h).
\end{equation}
Taking the upper limit of both sides of \eqref{2.12} and using the fact that $V\ni v \mapsto a(v, v)\in\mathbb{R}$ is weakly lower semicontinuous, i.e., for $u_{\alpha}^h \rightarrow \xi^h$ weakly in $V$, as $\alpha\to\infty$, we have $\displaystyle a(\xi^h, \xi^h) \le \liminf_{\alpha \rightarrow \infty} a(u_{\alpha}^h, u_{\alpha}^h)$, so also $\displaystyle -a(\xi^h, \xi^h) \ge \limsup_{\alpha \rightarrow \infty} (-a(u_{\alpha}^h, u_{\alpha}^h))$, together with
$a(u_{\alpha}^h, w^h) \rightarrow a(\xi^h, w^h)$, as $\alpha \rightarrow \infty$, we obtain
\begin{align*}
\lim_{\alpha \rightarrow \infty} a(u_{\alpha}^h, w^h) + \limsup_{\alpha \rightarrow \infty} (-a(u_{\alpha}^h, u_{\alpha}^h)) \ge \lim_{\alpha \rightarrow \infty} L(w^h - u_{\alpha}^h),
\end{align*}
which implies
\begin{equation}\label{2.13}
a(\xi^h, w^h - \xi^h) \ge L(w^h - \xi^h) \quad \forall w^h \in K^h.
\end{equation}
Let $v^h \in K_0^h$.
We can take $w^h = \xi^h \pm v^h \in K^h$ in \eqref{2.13} and obtain
\begin{align*}
a(\xi^h, v^h) \ge L(v^h) \quad \text{and} \quad a(\xi^h, -v^h) \ge L(-v^h), \quad \text{respectively,}
\end{align*}
which implies that
$$\xi^h \in K^h \text{ satisfies }\  a(\xi^h, v^h) = L(v^h) \text{ for all } v^h \in K_0^h.$$
It means that $\xi^h$ solves ($S_{\infty}^h$), and because ($S_{\infty}^h$) has the unique solution $u_{\infty}^h$, hence $\xi^h = u_{\infty}^h$.

It remains to prove that $u_{\alpha}^h \rightarrow u_{\infty}^h$ strongly in $V$, as $\alpha \rightarrow \infty$.
Using the coercivity of $a$ and choosing $w^h = u_\infty^h \in K^h$ in \eqref{2.12} we have
\begin{align*}
0 \leq
m_a \|u_{\alpha}^h - u_{\infty}^h\|_V^2
&\le a(u_{\infty}^h - u_{\alpha}^h, u_{\infty}^h - u_{\alpha}^h) \\
&= a(u_{\infty}^h, u_{\infty}^h - u_{\alpha}^h) - a(u_{\alpha}^h, u_{\infty}^h - u_{\alpha}^h) \\
&\le a(u_{\infty}^h, u_{\infty}^h - u_{\alpha}^h) - L(u_{\infty}^h - u_{\alpha}^h).
\end{align*}
We observe that right hand side of the above inequality tends to zero when $\alpha \rightarrow \infty$, because $u_{\infty}^h - u_{\alpha}^h \rightarrow u_{\infty}^h - \xi^h = 0$ weakly in $V$, as $\alpha \rightarrow \infty$.
Hence, we conclude $$\|u_{\infty}^h - u_{\alpha}^h\|_V \rightarrow 0, \text{ as } \alpha\to\infty, \text{ for every } h>0\ \text{fixed}.$$
%\end{proof}

%\begin{proof}
To complete the proof of the theorem, it remains to prove (3.).
We show that $\eta \in K$ satisfies ($S_{\infty}$).
To this end, let $w \in K$ be fixed.
From \eqref{!} there exists $w^h \in K^h$ such that $w^h \rightarrow w$ strongly in $V$, as $h \rightarrow 0$.
Similarly as before, taking $v^h = w^h - u_{\alpha}^h \in V_0^h$ in \eqref{2.1} and using $H(j)(d)$ we have \eqref{2.12}.
Taking the upper limit of both sides of \eqref{2.12}, using the weak lower semicontinuity of the form $v \mapsto a(v, v)$ for $u_{\alpha}^h \rightarrow \eta$ weakly in $V$, as $(h, \alpha) \rightarrow (0, \infty)$, and the convergence $a(u_{\alpha}^h, w^h) \rightarrow a(\eta, w)$, as $(h, \alpha) \rightarrow (0, \infty)$ (due to $u_{\alpha}^h \rightarrow \eta$ weakly in $V$, as $(h, \alpha) \rightarrow (0, \infty)$ and $w^h \rightarrow w$ strongly in $V$, as $h\to 0$), we obtain
\begin{align*}
\lim_{(h, \alpha) \rightarrow (0, \infty)} a(u_{\alpha}^h, w^h) + \limsup_{(h, \alpha) \rightarrow (0, \infty)} (-a(u_{\alpha}^h, u_{\alpha}^h)) \ge \lim_{(h, \alpha) \rightarrow (0, \infty)} L(w^h - u_{\alpha}^h),
\end{align*}
which implies
\begin{equation}\label{2.14}
a(\eta, w - \eta) \ge L(w - \eta) \quad \forall w \in K.
\end{equation}
Now let $v \in K_0$.
We can take $w = \eta \pm v \in K$ in \eqref{2.14} and obtain $a(\eta, v) \ge L(v)$ and $a(\eta, -v) \ge L(-v)$, respectively, which implies
\begin{align*}
\eta \in K: \quad a(\eta, v) = L(v) \quad \forall v \in K_0.
\end{align*}
It means that $\eta$ solves ($S_{\infty}$), and because ($S_{\infty}$) has the unique solution $u_{\infty}$, hence, $\eta = u_{\infty}$.

It suffices to show that $u_{\alpha}^h \rightarrow u_{\infty}$ strongly in $V$, as $(h, \alpha) \rightarrow (0, \infty)$.
From \eqref{!}, for $\eta = u_{\infty} \in K$ there exists $w^h \in K^h$ such that $w^h \rightarrow u_{\infty}$ strongly in $V$, as $h \rightarrow 0$.
We take again $v^h = w^h - u_{\alpha}^h \in V_0^h$ in \eqref{2.12} and use the coercivity of the form $a$ in order to obtain
\begin{align*}
0 \le m_a \|u_{\alpha}^h - w^h\|_V^2
&\le a(w^h, w^h - u_{\alpha}^h) - a(u_{\alpha}^h, w^h - u_{\alpha}^h) \\
&\le a(w^h, w^h - u_{\alpha}^h) - L(w^h - u_{\alpha}^h).
\end{align*}
We observe that the right hand side of the above inequality tends to zero,
because $w^h \rightarrow u_{\infty}$ strongly in $V$ and $w^h - u_{\alpha}^h \rightarrow u_{\infty} - u_{\infty} = 0$ weakly in $V$, as $(h, \alpha) \rightarrow (0, \infty)$.
It means that $\|u_{\alpha}^h - w^h\|_V \rightarrow 0$, as $(h, \alpha) \rightarrow (0, \infty)$.
Hence, we obtain
\begin{align*}
0 \le \|u_{\alpha}^h - u_{\infty}\|_V
\le \|u_{\alpha}^h - w^h\|_V + \|w^h - u_{\infty}\|_V
\rightarrow 0, \quad \text{as } (h, \alpha) \rightarrow (0, \infty)
\end{align*}
and finally, $\displaystyle \lim_{(h, \alpha) \rightarrow (0, \infty)} \|u_{\alpha}^h - u_{\infty}\|_V = 0$.
\end{proof}

\begin{Remark}\label{Rrates}
Theorem~\ref{Tall} asserts convergence but not a rate, and under ($H_0$) and $H(j)$ alone no rate for ($S_{\alpha}$) or ($S_{\alpha}^h$) can be expected.
First, the solution of ($S_{\alpha}$) need not be unique (cf.\ Remark~\ref{Rexist}), and an estimate for a solution selected arbitrarily from the solution set carries no information; Section~4 exhibits data for which ($S_{\alpha}^h$) has two solutions on a whole interval of $\alpha$, both strict local minimizers.
Second, the available theory of numerical approximation of elliptic hemivariational inequalities \cite{HanFengWangHuang2025, HanSofonea2019, HanSofoneaBarboteu2017} yields uniqueness and optimal-order estimates under a \emph{relaxed monotonicity} condition, namely that there exists $m_j \ge 0$ with
$$(\xi_1 - \xi_2)(r_1 - r_2) \ge -\, m_j\, |r_1 - r_2|^2 \qquad \forall\, \xi_i \in \partial j(x, r_i),\ i = 1, 2,\ \text{a.e. } x \in \Gamma_3,$$
together with a smallness condition of the form $\alpha\, m_j\, c^2 < m_a$, where $c$ is the norm of the trace embedding $V_0 \hookrightarrow L^2(\Gamma_3)$.
For any $m_j > 0$ the latter fails as soon as $\alpha \ge m_a/(m_j c^2)$, and it is therefore structurally incompatible with the limit $\alpha \to +\infty$ studied here.
This is the difficulty created by the simultaneous double limit: the proof of Theorem~\ref{Tall} rests on compactness and on \eqref{j.1} rather than on a C\'ea-type argument, and Proposition~\ref{Pcea}, which concerns the limit problem alone, has no counterpart uniform in $\alpha$.
\end{Remark}

\section{Numerical Results}

In this section we present numerical simulations for the steady-state heat conduction problems that illustrate the convergence results established in Theorem~\ref{Tall} and Proposition~\ref{Pcea}.
In our simulations, we use the original open source package \textit{conmech} (\url{https://github.com/KOS-UJ/conmech}).
The code is written in Python and uses just-in-time compilation mechanisms provided by packages \textit{numba} \cite{NUMBA}.
A detailed description of the numerical methods implemented in \textit{conmech} and their application to contact mechanics problems governed by hemivariational inequalities can be found in \cite{OJB}.
The discrete problems are solved by the aggregate subgradient method of \cite{BBO2025}, applied to the discrete energy after a Schur reduction to the trace on $\Gamma_3$.

We work on the two-dimensional domain $\Omega = (0, 2) \times (0, 1)$ with boundary $\Gamma = \partial\Omega$ partitioned into:
\begin{itemize}
    \item[] $\Gamma_1 = [0, 2] \times \{0\}$ (the homogeneous Dirichlet boundary condition: temperature fixed to zero),
    \item[] $\Gamma_2 = \{0, 2\} \times (0, 1)$ (the Neumann boundary condition: prescribed heat flux),
    \item[] $\Gamma_3 = [0, 2] \times \{1\}$ (penalized boundary condition in Problem ($S_\alpha$); and the Dirichlet boundary condition $u = b$ in Problem ($S_\infty$)),
\end{itemize}
except in Example~2, where the partition is modified as described below.
The computations use piecewise-affine ($\mathbb{P}_1$) finite elements on a nested family of uniform triangular meshes, each obtained by uniform refinement of the previous one, with mesh sizes $h \in \{1/4, 1/8, \dots, 1/512\}$ and penalization parameters $\alpha \in \{$1, 3, 10, 30, 60, $10^2$, 150, 200, 300, 600, $10^3, 2\cdot10^3, 3\cdot10^3, 10^4, \infty\}$; the finest computations involve $525\,825$ nodes.
Where no closed-form solution is available, the discrete solution computed on $h_{\rm ref} = 1/512$ serves as a reference and errors are reported for $h$ up to $1/64$, so that the ratio $h/h_{\rm ref}$ is at least eight and the error of the reference is negligible in comparison.
All examples are verified by the method of manufactured solutions: the data are chosen so that the limit problem, and in Example~1 also the penalized problem, admit a solution in closed form.

\medskip\noindent\textbf{Example 1.}
We first take $g(x) = -4$, $q = 0$, $b = 5$ and $j(r) = \tfrac{1}{2}(r-b)^2$.
This potential satisfies ($H_0$) and $H(j)(a)$--$(e)$; it is the quadratic, convex and differentiable superpotential to which the double-limit result of \cite{Tarzia2016} is restricted, and we use it here because it is the one case in which both ($S_{\infty}$) and ($S_{\alpha}$) can be solved in closed form.
The data do not depend on $x_1$, so the solutions depend on $x_2$ only and
\begin{align*}
u_{\infty}(x_1, x_2) = 2 x_2^2 + 3 x_2, \qquad u_{\alpha}(x_1, x_2) = 2 x_2^2 + \frac{3\alpha - 4}{1 + \alpha}\, x_2 ,
\end{align*}
whence
\begin{align}\label{exactdiff}
u_{\alpha} - u_{\infty} = -\,\frac{7}{1+\alpha}\, x_2, \quad \|u_{\alpha} - u_{\infty}\|_{L^2(\Omega)} = \frac{7}{1+\alpha}\sqrt{\tfrac{2}{3}}, \quad \|u_{\alpha} - u_{\infty}\|_{V} = \frac{7}{1+\alpha}\sqrt{\tfrac{8}{3}} .
\end{align}
The penalization error is thus exactly of order $\alpha^{-1}$, and since the solutions are quadratic in $x_2$ they are not reproduced exactly by $\mathbb{P}_1$ elements, so the two error sources separate: here they are of size $\alpha^{-1}$ and $h^{r-1}$, the double limit is of optimal order along paths with $\alpha^{-1} = O(h^{r-1})$, and neither error vanishes when one of the two parameters is held fixed.
A genuinely two-dimensional variant is obtained by adding $\sin(\pi x_1/2)\sin(\pi x_2)$ to $u_{\infty}$ and adjusting $g$ and $q$ accordingly; all junction points of $\Gamma_1$, $\Gamma_2$ and $\Gamma_3$ are then right angles with compatible data, so the case $r = 2$ of Proposition~\ref{Pcea} applies.

\begin{figure}[ht!]
    \centering
    \includegraphics[width=\linewidth]{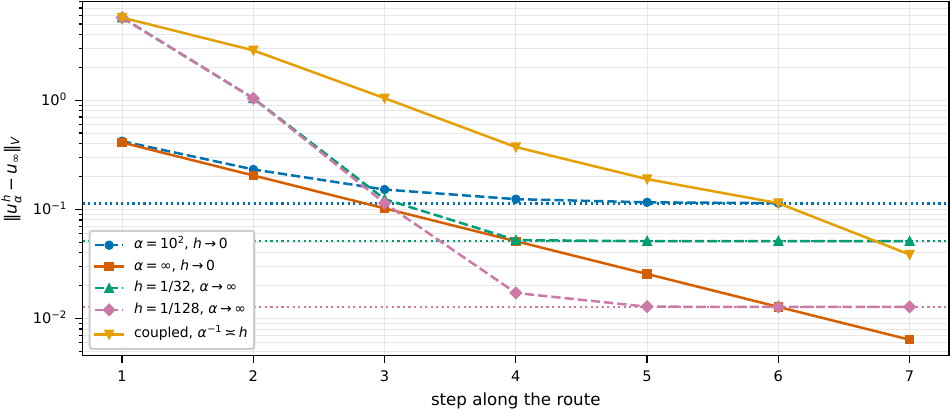}
        \caption{Example~1: distance in the norm of $V$ from every point of five routes across the $(h, \alpha)$ grid to the exact solution $u_\infty$.
    Dotted horizontal lines mark the independently computed floor of each stagnating route: the penalization floor $7\sqrt{8/3}/(1+\alpha)$ from \eqref{exactdiff}, or the discretization error of the corresponding mesh.
    Only the route at $\alpha=\infty$ and the coupled one, on which $\alpha^{-1}$ and $h$ decrease at comparable rates, converge.}
    \label{fig:paths}
\end{figure}

Figure~\ref{fig:paths} shows five routes across the $(h, \alpha)$ grid, measured in the norm of $V$ against $u_{\infty}$ on common axes, with the independently known floor of each stagnating route drawn as a horizontal reference.
Refining $h$ at $\alpha = 10^{2}$ stops at $1.139\cdot 10^{-1}$, against the exact penalization floor at $1.132\cdot 10^{-1}$ of \eqref{exactdiff}, while raising $\alpha$ at $h = 1/32$ and $h = 1/128$ stops at the discretization errors of those meshes.
The fixed values are chosen so that the crossover between the two error sources falls inside the plotted range.
The last route couples the parameters, pairing $\alpha \in \{1, 3, 10, 30, 60, 10^{2}, 300\}$ with $h = 2^{-(k+2)}$, $k = 0, \dots, 6$, so that $\alpha^{-1}/h$ stays between $0.85$ and $4.00$ and neither source dominates at any step; it converges with the same slope $1$ as the route at $\alpha = \infty$, with a larger constant.
This corresponds with Theorem~\ref{Tall}.\textit{3.}: only the two routes along which both parameters vary reach zero, each of the other three leveling off at its own floor.
Table~\ref{tab:ex1_1d_alpha_order} isolates the dependence on $\alpha$ at $h = 1/64$ and confirms that order over six orders of magnitude.

\begin{table}[ht]
\centering
\begin{tabular}{|c|c|c|c|c|c|}
\hline
$\alpha$ & $\|u^h_\alpha-u^h_\infty\|_{L^2}$ & $\|u^h_\alpha-u^h_\infty\|_{V}$ & rate & $\frac{7}{1+\alpha}\sqrt{2/3}$ & rel. dev. \\
\hline
$10^{0}$ & 2.858e+00 & 5.715e+00 & -- & 2.858e+00 & 0.00\% \\
$10^{1}$ & 5.196e-01 & 1.039e+00 & 0.74 & 5.196e-01 & 0.00\% \\
$10^{2}$ & 5.659e-02 & 1.132e-01 & 0.96 & 5.659e-02 & 0.00\% \\
$10^{3}$ & 5.710e-03 & 1.142e-02 & 1.00 & 5.710e-03 & 0.00\% \\
$10^{4}$ & 5.715e-04 & 1.143e-03 & 1.00 & 5.715e-04 & 0.00\% \\
$10^{5}$ & 5.715e-05 & 1.143e-04 & 1.00 & 5.715e-05 & 0.00\% \\
$10^{6}$ & 5.716e-06 & 1.143e-05 & 1.00 & 5.715e-06 & 0.00\% \\
\hline
\end{tabular}
\caption{Example 1, $h=1/64$: distance between the penalised and the limit discrete solutions, its order in $\alpha$, and the analytic value of \eqref{exactdiff}, matched to $0.00\%$ on every row.
The order settles at $1$ from $\alpha=10^{2}$ upwards; the first two entries are lower since the gap is proportional to $(1+\alpha)^{-1}$ so we have $\log_{10}(11/2)=0.7404$ and $\log_{10}(101/11)=0.9629$.}
\label{tab:ex1_1d_alpha_order}
\end{table}

\medskip\noindent\textbf{Example 2.}
To test the results of Proposition~\ref{Pcea} we keep the domain and the data of Example~1 but replace the boundary partition by
$$\Gamma_1 = [0,2] \times \{0\}, \qquad \Gamma_3 = [0.5, 1.5] \times \{1\}, \qquad \Gamma_2 = \Gamma \setminus (\Gamma_1 \cup \Gamma_3),$$
so that $\Gamma_3$ covers only the middle part of the upper edge.
At the two junction points a Dirichlet and a Neumann condition meet at the angle $\pi$ rather than at a right angle, so the leading singular exponent drops from $1$ to $1/2$ and $u_{\infty} \in H^{3/2-\varepsilon}(\Omega)$ only; \eqref{cea2} then predicts the reduced rate $O(h^{1/2})$ in the norm of $V$ and $O(h)$ in $L^2(\Omega)$.
Both are confirmed: refining $h$ from $1/4$ to $1/64$ against the reference on $h_{\rm ref} = 1/512$, the estimated rates oscillate about $0.54$ in the norm of $V$ and about $1.07$ in $L^2(\Omega)$, the last refinement of each sequence being already affected by the error of the reference solution.

\medskip\noindent\textbf{Examples 3 and 4.}
To probe the hemivariational character of the problem we choose data whose limit flux changes sign on $\Gamma_3$: keeping the geometry of Example~1, we take $q = 0$, $b = 5$ and
$$g(x_1,x_2) = 300\cos\!\Big(\frac{\pi x_1}{2}\Big) \Big(2 + \frac{\pi^2}{4}\,x_2(1-x_2)\Big),$$
for which $u_{\infty}(x_1,x_2) = 5x_2 + 300\cos(\pi x_1/2)\,x_2(1-x_2)$, and the limit flux on $\Gamma_3$,
$\sigma := \partial u_{\infty}/\partial n|_{\Gamma_3}$, is $\sigma(x_1) = 5 - 300\cos(\pi x_1/2)$.
The choice of $\cos(\pi x_1/2)$ makes $\partial u_{\infty}/\partial x_1$ vanish on both vertical edges, so $q = 0$ on the whole of $\Gamma_2$.
The flux changes sign near the middle of $\Gamma_3$, so at leading order the trace should exceed $b$ on a fraction $0.495$ of $\Gamma_3$, independently of $\alpha$ and $h$.

Example~3 keeps the logarithmic law of \cite{GMOT2021}, $j(r) = (r-b)^2$ for $r < b$ and $j(r) = 2\ln(r-b+1)$ for $r \ge b$, whose subdifferential jumps upwards at $b$ and then decays; since $j'' \ge -2$ on $(b,\infty)$, it satisfies the relaxed monotonicity condition of Remark~\ref{Rrates} with $m_j = 2$, so it is the smallness condition, and not relaxed monotonicity, that fails for it.
Example~4 uses the thermostat law
$$j_{\delta}(r) = \min\Big\{ \frac{(r-b)^2}{\delta},\ |r-b| \Big\}, \qquad \delta = 0.1,$$
which is continuous, satisfies $H(j)(a)$--$(e)$, and whose Clarke subdifferential has \emph{downward} jumps at $r = b \pm \delta$: comparing two points on either side of such a jump at distance $\varepsilon$, the left-hand side of the relaxed monotonicity condition is of order $-\varepsilon$ and the right-hand side of order $-m_j \varepsilon^2$, so no finite $m_j$ satisfies it.
Example~4 is therefore explicit data satisfying $H(j)$ for which the theory of \cite{HanSofoneaBarboteu2017, HanSofonea2019} is inapplicable for every $\alpha$.

The limit problem is common to both examples, the contact law playing no role at $\alpha = \infty$, and the rates against the closed-form solution are those of Figure~\ref{fig:convergence}.
At $\alpha = 10$ the nonconvex branch is reached on a substantial part of $\Gamma_3$ in both examples, and the rates against the reference solution are unchanged: the nonconvexity does not degrade them.
Figure~\ref{fig:threshold} records, for each $\alpha$, the fraction of $\Gamma_3$ on which the trace exceeds $b$,
the fraction on which the overshoot $t := \gamma u^h_\alpha - b$ exceeds $\delta$ in absolute value, and the trace itself.
The first is flat in $\alpha$ for Example~4 and agrees with the predicted value, and for Example~3 it varies somewhat, consistently with the asymmetry of the logarithmic law, whose subdifferential is unbounded below $b$ and bounded above it; in neither case does it fall.
The second measures whether the features of $j$ are reached at all, and it is the one that falls: for Example~4 it collapses to zero between $\alpha = 200$ and $\alpha = 300$, the overshoot dropping by three orders of magnitude in that single step, whereas for Example~3 it is still far from zero at $\alpha = 10^{3}$.
For both laws considered here the features of $j$ lie at a positive distance from $b$, and $\gamma u_{\alpha} \to b$ on $\Gamma_3$, so they cease to be reached once $\alpha$ is large enough -- abruptly for the thermostat law, whose subdifferential is affine on $|r-b| \le \delta$ and reaches $\pm 2$ at the ends of that interval, gradually for the logarithmic one, whose subdifferential is bounded above $b$ so that a large flux can only be absorbed far from it.
Beyond that point the computations reproduce the limit problem.
The dotted line in Figure~\ref{fig:threshold} marks $\alpha^\ast = \|\sigma\|_{L^{\infty}(\Gamma_3)}/2$, the value at which the thermostat law could first absorb the whole limit flux within $|t| \le \delta$; the observed transition lies above it, so this estimate gives the right order of magnitude but not the right value.
The lower subfigures show why the first fraction must not be read on its own: at $\alpha = 10^{3}$ the trace of Example~4 is a flat line on $b$ across the whole of $\Gamma_3$ while that of Example~3 still stands far above it, yet both report a fraction close to one half, since once the trace has been pulled onto $b$ what the fraction reports is the sign of a residue several orders of magnitude smaller than the solution.

\begin{figure}[ht!]
    \centering
    \includegraphics[width=\linewidth]{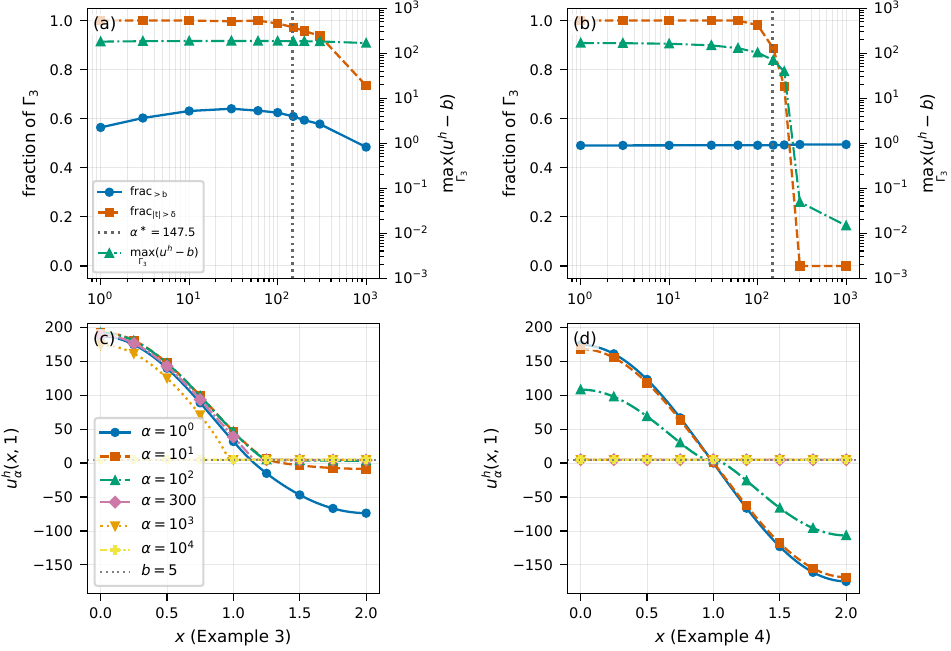}
    \caption{Examples~3 (left) and~4 (right), $h = 1/32$.
    Top: the fraction of $\Gamma_3$ above $b$, the fraction on which the overshoot exceeds $\delta$, and the overshoot itself on a logarithmic right-hand axis; only the second falls with $\alpha$.
    The dotted line marks $\alpha^\ast = \|\sigma\|_{L^\infty(\Gamma_3)}/2$, the value at which the boundary condition of Example~4 could first absorb the whole limit flux within $|t| \le \delta$.
    Bottom: the trace on $\Gamma_3$ at several values of $\alpha$, up to $\alpha = 10^{4}$; the two examples report nearly the same fraction above $b$ and are not the same situation, since on the right the trace has been pulled onto $b$.}
    \label{fig:threshold}
\end{figure}

Problem ($S_{\alpha}^h$) is not known to have a unique solution, and for Example~3 it is not.
Figure~\ref{fig:branches} reports, for $h = 1/32$, the discrete energy at the solutions returned from five starting points: the zero field, the constants $b$, $b+5$ and $b+100$, and a random field.
Outside the interval bracketed by $(250, 300]$ and $[2000, 3000)$ all five agree; inside it they split into two groups differing by $97\%$ in the solution as well as in the energy, with nothing in between.
At $\alpha = 10^3$ four starting points return a solution whose trace stands at a distance of order $10^{2}$ from $b$, and one returns a solution pinned to $b$; the two energies differ by the order of the energy itself.
Table~\ref{tab:multistart} gives the spread at three values of $\alpha$ for both examples.
Example~4 is the control: the same five starting points agree to $10^{-14}$ at every $\alpha$ on either side of its transition, so the multiplicity is a property of the superpotential and the data, not of the mesh or of the optimization procedure; the same table shows that the errors reported above are governed by the discretization, since where the solution is unique the spread relative to the energy is negligible.

\begin{figure}[ht]
    \centering
    \includegraphics[width=\linewidth]{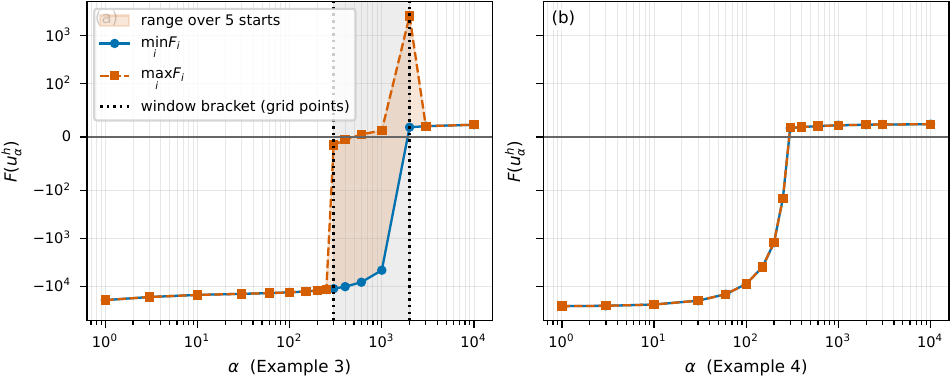}
    \caption{Examples~3 and~4, $h = 1/32$: the functional over five starting points against $\alpha$, the shaded band being the range.
    For Example~3 the band is open on $(250,300]\ldots[2000,3000)$, which is the window on which the discrete problem has two solutions; for Example~4 it is closed everywhere except at the transition itself, where the spread is the optimiser stopping in different places on a valley that has flattened.}
    \label{fig:branches}
\end{figure}

\begin{table}[ht]
\centering
\begin{tabular}{|c|c|c|c|c|c|}
\hline
Example & $\alpha$ & $\max_i F_i-\min_i F_i$ & $\max_{i,j}\|u_i-u_j\|_\infty$ & $\min_i F_i$ & attained by \\
\hline
3 & $10^{1}$ & 1.110e-08 & 2.507e-04 & -1.516e+04 & random \\
3 & $10^{2}$ & 1.766e-08 & 4.297e-04 & -1.341e+04 & $b+5$ \\
3 & $10^{3}$ & 4.634e+03 & 1.692e+02 & -4.622e+03 & $b+100$ \\
4 & $10^{1}$ & 2.587e-09 & 2.486e-05 & -2.406e+04 & zero \\
4 & $10^{2}$ & 4.064e-07 & 6.015e-04 & -8.940e+03 & $b$ \\
4 & $10^{3}$ & 1.776e-14 & 3.164e-09 & 2.206e+01 & zero \\
\hline
\end{tabular}
\caption{Examples 3 and 4, $h=1/32$: spread over five starting points, in the functional and in the solution, and a starting point attaining the minimum.
At $\alpha=10^{3}$ the functional of Example~3 spans the whole of its own size, while Example~4 agrees to $1.8\cdot10^{-14}$ at every $\alpha$ shown.
The last column names one of several tied starting points and is not a recommendation: for Example~3, which of them reach the lower branch changes with $\alpha$, and no fixed start works throughout the window.}
\label{tab:multistart}
\end{table}

Two transitions occur, and they are distinct.
The active branch first ceases to be the global minimum, somewhere in $(10^{3}, 2\cdot 10^{3})$, and only afterwards, in $[2\cdot 10^{3}, 3\cdot 10^{3})$, does it cease to exist.
Both solutions are strict local minimizers, the reduced Hessian being positive definite at each of them on both meshes tested, so the nonconvexity is global and no local criterion detects it -- precisely the configuration in which a C\'ea-type estimate cannot be established.
We have not determined whether the pinned branch is a second local minimum or a saddle, and the edges of the window are bracketed by the grid of Figure~\ref{fig:branches} rather than resolved.
The multiplicity is also visible in the errors: measuring against the reference solution at $\alpha = 10^3$ for Example~3, as the mesh is refined, the $L^2$ rates are erratic, because different meshes do not all return the same branch, against the smooth rates obtained for Example~4 at the same $\alpha$; no convergence rate is claimed from that comparison.
Figure~\ref{fig:temperature} shows the two solutions as fields.

\begin{figure}[ht!]
    \centering
    \includegraphics[width=\linewidth]{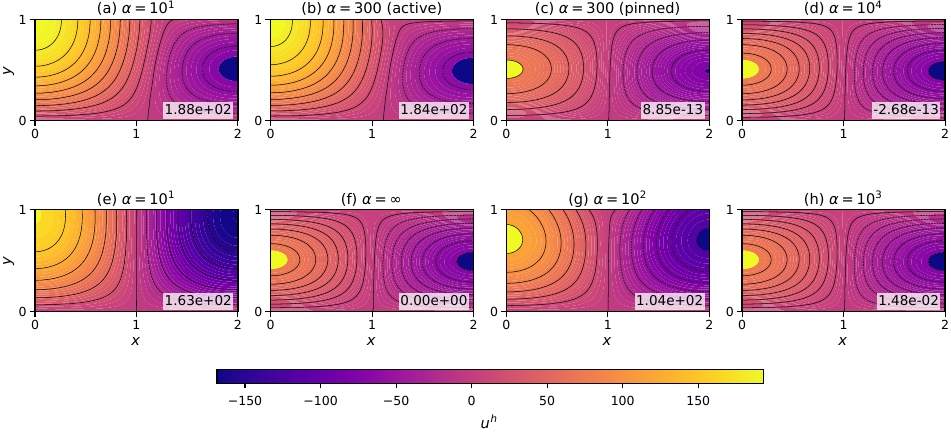}
    \caption{Discrete solutions at $h=1/64$ on a colour scale shared by all subfigures.
    Subfigures (a)--(d) are Example~3 at $\alpha = 10$, at $\alpha = 300$ on the active and on the pinned branch, and at $\alpha = 10^{4}$; subfigure (f) is the limit problem, common to Examples~3 and~4; subfigures (e), (g) and (h) are Example~4 at $\alpha = 10$, $10^{2}$ and $10^{3}$.
    Subfigures (b) and (c) are the same equation, the same $\alpha$ and the same mesh, solved from two different starting points: they are two solutions of one discrete problem.}
    \label{fig:temperature}
\end{figure}

The numerical results are in agreement with the theoretical predictions, and each of the three convergences of Theorem~\ref{Tall} has been verified separately.
Figure~\ref{fig:convergence} collects the convergence in $h$ for the three families of data.
For Example~1 and for the limit problem of Examples~3 and~4 the rates measured against the closed-form solutions are $1.00$ in the norm of $V$ and $2.00$ in $L^2(\Omega)$; since the convergence of Theorem~\ref{Tall} takes place in $V$, it is the first that verifies the theory and matches Proposition~\ref{Pcea}, the second being reported for completeness.
For Example~2 the slope fitted over the refinements unaffected by the reference solution is $0.54$ against the predicted $1/2$, so the exponent $r-1$ in \eqref{cea2} is attained in both regimes and cannot be improved.
On an interval of $\alpha$, however, Example~3 has two solutions with energies differing by the order of the energy itself, both strict local minimizers.
The convergence asserted by Theorem~\ref{Tall} is thus compatible with a discrete problem that is genuinely non-unique at finite $\alpha$, and this is the precise sense in which an error estimate uniform in $\alpha$ is out of reach.

\begin{figure}[ht!]
    \centering
    \includegraphics[width=\linewidth]{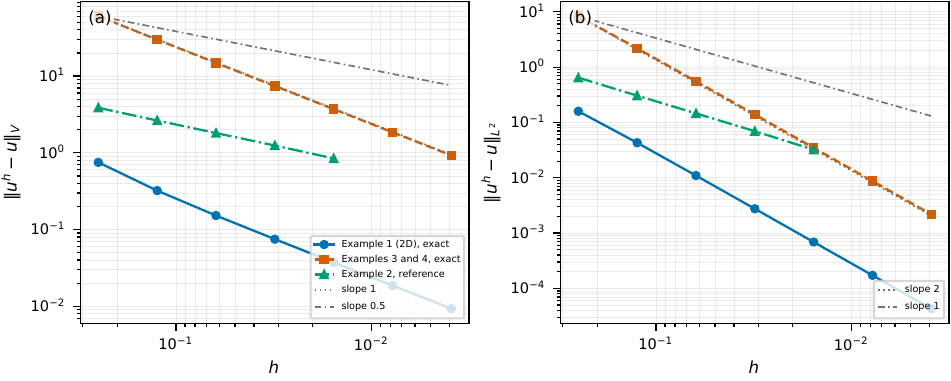}
    \caption{Convergence in $h$ for three families: Example~1 (two-dimensional data) and Examples~3 and~4 at $\alpha=\infty$ against their closed-form solutions, and Example~2 against a reference on $h_{\rm ref}=1/512$.
    Left, the $V$ norm; right, $L^2$.
    Two families sit on slope $1$ in $V$ and the third on $1/2$, which is the sharpness of the estimate~\eqref{cea2} as a picture rather than as a claim in the text.}
    \label{fig:convergence}
\end{figure}
\

\noindent\textbf{Acknowledgments.} We would like to thank the two anonymous referees for their constructive comments, which helped improve the readability of the manuscript. 
The first two authors are supported by National Science Center, Poland, under project OPUS no. 2021/41/B/ST1/01636. The third author is supported by project O06-25CI2002 from Universidad Austral, Rosario, Argentina.

\end{document}